\documentclass[12pt]{article}
\usepackage{amssymb}
\usepackage{mathrsfs}
\usepackage[hypertex]{hyperref}
\usepackage{amsfonts}
\usepackage{graphicx}
\usepackage{mathptmx}
\usepackage{latexsym,amsmath,amssymb,amsfonts,amsthm}

\newtheorem{theorem}{Theorem}[section]
\newtheorem{lemma}[theorem]{Lemma}

\newtheorem{corollary}[theorem]{Corollary}
\newtheorem{proposition}[theorem]{Proposition}
\newtheorem{remark}[theorem]{Remark}
\newtheorem{example}[theorem]{Example}

\newtheorem{defn}[theorem]{Definition}

\setlength{\textwidth}{165mm} \setlength{\textheight}{230mm}
\setlength{\oddsidemargin}{0mm} \setlength{\topmargin}{-.3in}
\pagestyle{myheadings}

\begin{document}
\setcounter{page}{1}
\title{Eigenvalue inequalities for the $p$-Laplacian on a Riemannian manifold and estimates for the heat kernel}
\author{Jing Mao$^{1,2}$}
\date{}
\protect\footnotetext{\!\!\!\!\!\!\!\!\!\!\!\!{ MSC 2010: 35J60;
35P15; 58C40}
\\
{ ~~Key Words: $p$-Laplacian; Cheeger constant; Radial Ricci
curvature; Radial sectional curvature; Heat kernel}
 }
\maketitle ~~~\\[-15mm]
\begin{center}{\footnotesize
$^{1}$Centro de F\'{\i}sica das Interac\c c\~{o}es Fundamentais,
Instituto Superior T\'{e}cnico, Technical University of Lisbon,
Edif\'{\i}cio Ci\^{e}ncia, Piso 3, Av. Rovisco Pais, 1049-001
Lisboa, Portugal; jiner120@163.com, jiner120@tom.com

$^{2}$Departamento de Matem\'{a}tica, Instituto Superior
T\'{e}cnico, Technical University of Lisbon, Edif\'{\i}cio
Ci\^{e}ncia, Piso 3, Av.\ Rovisco Pais, 1049-001 Lisboa, Portugal}
\end{center}

%\\[5mm]
\begin{abstract}
  In this paper, we successfully generalize the eigenvalue comparison
theorem for the Dirichlet $p$-Laplacian ($1<p<\infty$) obtained by
Matei [A.-M. Matei, First eigenvalue for the $p$-Laplace operator,
\emph{Nonlinear Anal. TMA} {\bf 39} (8) (2000) 1051--1068] and
Takeuchi [H. Takeuchi, On the first eigenvalue of the $p$-Laplacian
in a Riemannian manifold, \emph{Tokyo J. Math.} {\bf 21} (1998)
135--140], respectively. Moreover, we use this generalized
eigenvalue comparison theorem to get estimates for the first
eigenvalue of the Dirichlet $p$-Laplacian of geodesic balls on
complete Riemannian manifolds with \emph{radial Ricci curvature
bounded from below w.r.t. some point}. In the rest of this paper, we
derive an upper and lower bound for the heat kernel of geodesic
balls of complete manifolds with specified curvature constraints,
which can supply new ways to prove the most part of two generalized
eigenvalue comparison results given by Freitas, Mao and Salavessa in
[P. Freitas, J. Mao and I. Salavessa, Spherical symmetrization and
the first eigenvalue of geodesic disks on manifolds, submitted
(2012)].
\end{abstract}

\markright{\sl\hfill  J. Mao \hfill}

\section{Introduction}
\renewcommand{\thesection}{\arabic{section}}
\renewcommand{\theequation}{\thesection.\arabic{equation}}
\setcounter{equation}{0} \setcounter{maintheorem}{0}

By using the theory of self-adjoint operators, the spectral
properties of the linear Laplacian on a domain in a Euclidean space
or a manifold have been studied extensively. Mathematicians
generally are interested in the spectrum of the Laplacian on compact
manifolds (with or without boundary) or noncompact complete
manifolds, since in these two cases the linear Laplacians can be
uniquely extended to self-adjoint operators (cf. \cite{mp2,mp1}).
However, the spectrum of the Laplacian on noncompact noncomplete
manifolds also attracts attention of mathematicians and physicists
in the past three decades, since the study of the spectral
properties of the Dirichlet Laplacian in infinitely stretched
regions has applications in elasticity, acoustics, electromagnetism,
quantum physics, etc. Recently, the author has proved the existence
of discrete spectrum of the linear Laplacian on a class of
$4$-dimensional rotationally symmetric quantum layers, which are
noncompact noncomplete manifolds, in \cite{m} under some geometric
assumptions therein.

A natural generalization of the linear Laplacian is the so-called
$p$-Laplacian below. Although many results about the linear
Laplacian ($p=2$) have been obtained, many rather basic questions
about the spectrum of the nonlinear $p$-Laplacian remain to be
solved.

Let $\Omega$ be a bounded domain on an $n$-dimensional Riemannian
manifold $(M,g)$. We consider the following nonlinear Dirichlet
eigenvalue problem
\begin{eqnarray*}
\left\{
\begin{array}{ll}
\Delta_{p}u+\lambda|u|^{p-2}u=0  ~\qquad {\rm{in}}~\Omega,\\
  u=0   \qquad \qquad \qquad \qquad {\rm{on}}~\partial\Omega,  & \quad
  \end{array}
\right.
\end{eqnarray*}
where $\Delta_{p}u={\rm{div}}(|\nabla{u}|_{g}^{p-2}\nabla{u})$ is
the $p$-Laplacian with $1<p<\infty$. In local coordinates
$\{x_{1},\ldots,x_{n}\}$ on $M$, we have
\begin{eqnarray} \label{1.1}
\Delta_{p}u=\frac{1}{\sqrt{\det(g_{ij})}}\sum\limits_{i,j=1}^{n}\frac{\partial}{\partial{x_{i}}}\left(\sqrt{\det(g_{ij})}g^{ij}|\nabla{u}|^{p-2}
\frac{\partial{u}}{\partial{x_{j}}}\right),
\end{eqnarray}
where
$|\nabla{u}|^2=|\nabla{u}|^2_{g}=\sum\limits_{i,j=1}^{n}g^{ij}\frac{\partial{u}}{\partial{x_{i}}}\frac{\partial{u}}{\partial{x_{j}}}$,
and $(g^{ij})=(g_{ij})^{-1}$ is the inverse of the metric matrix. A
well-known result about the above nonlinear eigenvalue problem
states that it has a positive weak solution, which is unique modulo
the scaling, in the space $W^{1,p}_{0}(\Omega)$, the completion of
the set $C^{\infty}_{0}(\Omega)$ of smooth functions compactly
supported on $\Omega$ under the Sobolev norm
$\|u\|_{1,p}=\{\int_{\Omega}(|u|^p+|\nabla{u}|^p)d\Omega\}^{\frac{1}{p}}$.
For a bounded simply connected domain with sufficiently smooth
boundary in Euclidean space, one can get a simple proof of this fact
in \cite{mb}. Moreover, the first Dirichlet eigenvalue
$\lambda_{1,p}(\Omega)$ of the $p$-Laplacian can be characterized by
\begin{eqnarray} \label{1.2}
\lambda_{1,p}(\Omega)=\inf\left\{\frac{\int_{\Omega}|\nabla{u}|^{p}d\Omega}{\int_{\Omega}|u|^{p}d\Omega}\Big{|}
u\neq0, u\in{W}^{1,p}_{0}(\Omega)\right\}.
\end{eqnarray}

By using spherically symmetric manifolds as the model spaces and
applying a similar method to that of the proof of theorem 3.6 in
\cite{fmi}, we give a Cheng-type eigenvalue comparison result for
the first eigenvalue of the $p$-Laplace operator in Section 3 -- see
Theorem \ref{theorem1} for the precise statement.

Besides the $p$-Laplacian, we also investigate the heat equation in
this paper. Given an $n$-dimensional Riemannian manifold $M$ with
associated Laplace-Beltrami operator $\Delta$. Then we are able to
define a differential operator $L$, which is known as the heat
operator, by
\begin{eqnarray*}
L=\Delta-\frac{\partial}{\partial{t}}
\end{eqnarray*}
acting on functions in $C^{0}\left(M\times(0,\infty)\right)$, which
are $C^{2}$ w.r.t. the variable $x$, varying on $M$, and $C^{1}$
w.r.t. the variable $t$, varying on $(0,\infty)$. Correspondingly,
the heat equation is given by
\begin{eqnarray} \label{heatequation}
Lu=0  \quad \left({\rm{equivalently,}~}
\Delta{u}-\frac{\partial{u}}{\partial{t}}=0\right),
\end{eqnarray}
with $u\in{C^{0}\left(M\times(0,\infty)\right)}$. The heat equation,
which can be used to describe the conduction of heat through a given
medium, and related deformations of the heat equation, like the
diffusion equation, the Fokker-Planck equation, and so on, are of
basic importance in variable scientific fields.

In fact, by applying volume comparison results proved by Freitas,
Mao and Salavessa in \cite{fmi}, we can obtain an upper and lower
bound for the heat kernel, which can be seen as an extension to the
existing results -- see Theorem \ref{71theoremheatchapter2} for the
precise statement.

The paper is organized as follows. In the next section, we will give
some preliminary knowledge on the model spaces. Theorem
\ref{theorem1} will be proved in Section 3. By using Theorem
\ref{theorem1}, some estimates for the first eigenvalue of the
Dirichlet $p$-Laplacian of a geodesic ball on a complete Riemannian
manifold with \emph{a radial Ricci curvature lower bound w.r.t. some
point} will be given in Section 4. Some fundamental truths about the
heat equation will be listed in Section 5. In Section 6, we will
prove Theorem \ref{71theoremheatchapter2} and give new ways to prove
the most part of two generalized eigenvalue comparison results in
\cite{fmi}. In fact, this paper is based on a part (Section 2.7 of
Chapter 2, Chapter 3) of the author's Ph.D. thesis
\cite{PhDJingMao}.

\section{Geometry of the model spaces and generalized Bishop's volume comparison
results}  \label{section2}
\renewcommand{\thesection}{\arabic{section}}
\renewcommand{\theequation}{\thesection.\arabic{equation}}
\setcounter{equation}{0} \setcounter{maintheorem}{0}
 One of the purposes of this paper is to give some inequalities for the first
eigenvalue of the $p$-Laplace operator. In order to state our
results here, we need to use some notions below, which have been
introduced in \cite{fmi,PhDJingMao} in detail.

For any point $q$ on an $n$-dimensional ($n\geq2$) complete
Riemannian manifold $M$ with the metric
$\langle\cdot,\cdot\rangle_M$ and the Levi-Civita connection
$\nabla$, we can set up a geodesic polar coordinates $(t,\xi)$
around this point $q$, where $\xi\in{S}_{q}^{n-1}\subseteq{T_{q}M}$
is a unit vector of the unit sphere $S_{q}^{n-1}$ with center $q$ in
the tangent space $T_{q}M$. Let $\mathcal{D}_{q}$, a star shaped set
of $T_{q}M$, and $d_{\xi}$ be defined by
\begin{eqnarray*}
\mathcal{D}_{q}=\{t\xi|~0\leq{t}<d_{\xi},~\xi\in{S^{n-1}_{q}}\},
\end{eqnarray*}
and
\begin{eqnarray*}
d_{\xi}=d_{\xi}(q):=  \sup\{t>0|~\gamma_{\xi}(s):= \exp_q(s\xi)~
{\rm{is~ the~ unique~ minimal~ geodesic ~joining} }~ q ~{\rm{and}}
~\gamma_{\xi}(t)\}.
\end{eqnarray*}
Then $\exp_q:\mathcal{D}_q \to M\backslash Cut(q)$ is a
diffeomorphism from $\mathcal{D}_q$ onto the open set $M\backslash
Cut(q)$, with $Cut(q)$ the cut locus of  $q$, which is a closed set
of zero $n$-Hausdorff measure. For $\eta\in{\xi^{\bot}}$, we can
define so-called the path of linear transformations
$\mathbb{A}(t,\xi):\xi^\perp\rightarrow{\xi^\perp}$ by
 \begin{eqnarray*}
\mathbb{A}(t,\xi)\eta=(\tau_{t})^{-1}Y(t),
 \end{eqnarray*}
with $\xi^\perp$ the orthogonal complement of $\{\mathbb{R}\xi\}$ in
$T_{q}M$, where $\tau_{t}:T_{q}M\rightarrow{T_{\exp_{q}(t\xi)}M}$ is
the parallel translation along the geodesic $\gamma_{\xi}(t)$ with
$\gamma'(0)=\xi$, and $Y(t)$ is the Jacobi field along
$\gamma_{\xi}$ satisfying $Y(0)=0$, $(\triangledown_{t}Y)(0)=\eta$.
Moreover, set
\begin{eqnarray*}
\mathcal{R}(t)\eta=(\tau_{t})^{-1}R(\gamma'_{\xi}(t),\tau_{t}\eta)
\gamma'_{\xi}(t),
\end{eqnarray*}
where the curvature tensor $R(X,Y)Z$ is defined by
$R(X,Y)Z=-[\nabla_{X},$ $ \nabla_{Y}]Z+ \nabla_{[X,Y]}Z$. Then
$\mathcal{R}(t)$ is a self-adjoint operator on $\xi^{\bot}$, whose
trace is the radial Ricci tensor
$$\mathrm{Ricci}_{\gamma_{\xi}(t)}(\gamma'_{\xi}(t),\gamma'_{\xi}(t)).$$
Clearly, the map $\mathbb{A}(t,\xi)$ satisfies the Jacobi equation
 $\mathbb{A}''+\mathcal{R}\mathbb{A}=0$ with initial conditions
 $\mathbb{A}(0,\xi)=0$, $\mathbb{A}'(0,\xi)=I$, and  by applying Gauss's lemma the Riemannian metric of $M$ can be
 expressed by
 \begin{eqnarray} \label{2.1}
 ds^{2}(\exp_{q}(t\xi))=dt^{2}+|\mathbb{A}(t,\xi)d\xi|^{2}
 \end{eqnarray}
on the set $\exp_{q}(\mathcal{D}_q)$. We consider the metric
components  $g_{ij}(t,\xi)$, $i,j\geq 1$, in a coordinate system
$\{t, \xi_a\}$ formed by fixing  an orthonormal basis $\{\eta_a,
a\geq 2\}$ of
 $\xi^{\bot}=T_{\xi}S^{n-1}_q$, and extending it to a local frame $\{\xi_a, a\geq2\}$ of
$S_q^{n-1}$. Define
 a function $J>0$ on $\mathcal{D}_{q}$ by
\begin{equation} \label{J}
J^{n-1}=\sqrt{|g|}:=\sqrt{\det[g_{ij}]}.
\end{equation}
 Since
 $\tau_t: S_q^{n-1}\to S_{\gamma_{\xi}(t)}^{n-1}$ is an
isometry, we have
$$
\langle d(\exp_q)_{t\xi}(t\eta_{a}),
d(\exp_q)_{t\xi}(t\eta_{b})\rangle_{M} =\langle
\mathbb{A}(t,\xi)(\eta_{a}), \mathbb{A}(t,\xi)(\eta_{b})\rangle_{M},
$$
and so,
$$ \sqrt{|g|}=\det\mathbb{A}(t,\xi).$$
So, by applying (\ref{2.1}) and (\ref{J}), the volume $V(B(q,r))$ of
a geodesic ball $B(q,r)$, with radius $r$ and center $q$, on $M$ is
given by
\begin{eqnarray} \label{2.2}
V(B(q,r))=\int_{S_{q}^{n-1}}\int_{0}^{\min\{r,d_{\xi}\}}\sqrt{|g|}dtd\sigma=\int_{S_{q}^{n-1}}\left(\int_{0}^{\min\{r,d_{\xi}\}}\det(\mathbb{A}(t,\xi))dt\right)d\sigma,
\end{eqnarray}
where $d\sigma$ denotes the $(n-1)$-dimensional volume element on
$\mathbb{S}^{n-1}\equiv S_{q}^{n-1}\subseteq{T_{q}M}$. Let
$inj(q):=d(q,Cut(q))=\min_{\xi}d_{\xi}$ be the injectivity radius at
$q$. In general, we have
$B(q,inj(q))\subseteq{M}\backslash{Cut(q)}$. Besides, for
$r<inj(q)$, by (\ref{2.2}) we can obtain
\begin{eqnarray*}
V(B(q,r))=\int_{0}^{r}\int_{S_{q}^{n-1}}\det(\mathbb{A}(t,\xi))d\sigma{dt}.
\end{eqnarray*}
Denote by $r(x)=d(x,q)$ the intrinsic distance to the point
$q\in{M}$. Then, by the definition of a non-zero tangent vector
``\emph{radial}" to a prescribed point on a manifold given in the
first page of \cite{KK}, we know that for
$x\in{M}\backslash(Cut(q)\cup{q})$ the unit vector field
\begin{eqnarray*}
v_{x}:=\nabla{r(x)}
\end{eqnarray*}
is the radial unit tangent vector at $x$. This is because for any
$\xi\in{S}_{q}^{n-1}$ and $t_{0}>0$, we have
$\nabla{r}{(\gamma_{\xi}(t_{0}))}=\gamma'_{\xi}(t_{0})$ when the
point $\gamma_{\xi}(t_{0})=\exp_{q}(t_{0}\xi)$ is away from the cut
locus of $q$ (cf. \cite{a2}). Set
\begin{eqnarray} \label{important}
l(q):=\sup_{x\in{M}}r(x),
\end{eqnarray}
 Then we have $l(q)=\max_{\xi}d_{\xi}$
(cf. Section 2 of \cite{fmi}). Clearly, $l(q)\geq{inj(q)}$. We also
need the following  fact about $r(x)$ (cf. \cite{p}, Prop. 39 on p.\
266),
\begin{eqnarray*}
\partial_{r}\Delta{r}+\frac{(\Delta{r})^2}{n-1}\leq\partial_{r}\Delta{r}+|{\rm{Hess}}r|^{2}=-{\rm{Ricci}}(\partial_{r},\partial_{r}),
\qquad {\rm{with}}~~\Delta{r}=\partial_{r}\ln( \sqrt{|g|}),
\end{eqnarray*}
with $\partial_{r}=\nabla{r}$ as a differentiable vector (cf.
\cite{p}, Prop. 7 on p.\ 47 for the differentiation of
$\partial_{r}$).
 Then, together with (\ref{J}), we have
\begin{eqnarray}
&& J''+\frac{1}{(n-1)}{\rm{Ricci}}\left(\gamma'_{\xi}(t),
\gamma'_{\xi}(t)\right)J\leq 0,  \label{J''}\\
&&J(0,\xi)=0,\quad J'(0,\xi)=1 \label{J0}.
\end{eqnarray}
The facts (\ref{J''}) and (\ref{J0}) make a fundamental role in the
derivation of the so-called \emph{generalized Bishop's volume
comparison theorem I} below (cf. \cite{fmi,PhDJingMao}).

We use spherically symmetric manifolds as our model spaces, which
can be defined as follows.
\begin{defn} (\cite{fmi,PhDJingMao}) \label{def1}
A domain
$\Omega=\exp_{q}([0,l)\times{S_{q}^{n-1}})\subset{M}\backslash
Cut(q)$, with $l<inj(q)$, is said to be spherically symmetric with
respect to a point $q\in\Omega$, if the matrix $\mathbb{A}(t,\xi)$
satisfies $\mathbb{A}(t,\xi)=f(t)I$, for a function
$f\in{C^{2}([0,l])}$, $l\in(0,\infty]$ with $f(0)=0$, $f'(0)=1$,
$f|(0,l)>0$.
\end{defn}

 So, by (\ref{2.1}), on the set
 $\Omega$ given in Definition \ref{def1} the Riemannian metric of $M$ can be
 expressed by
 \begin{eqnarray} \label{2.3}
 ds^{2}(\exp_{q}(t\xi))=dt^{2}+f(t)^{2}|d\xi|^{2}, \qquad
 \xi\in{S_{q}^{n-1}}, \quad 0\leq{t}<l,
 \end{eqnarray}
 with $|d\xi|^{2}$ the round metric on the unit sphere $\mathbb{S}^{n-1}\subseteq\mathbb{R}^{n}$.
 Spherically symmetric manifolds were named as generalized space
forms
 by Katz and Kondo \cite{KK}, and a standard model for
such manifolds is given by the quotient manifold of the warped
product $[0,l)\times_{f} \mathbb{S}^{n-1}$ equipped with the metric
(\ref{2.3}), where $f$ satisfies the conditions of Definition
\ref{def1}, and all pairs $(0,\xi)$ are identified with a single
point $q$ (see \cite{cg1}). More precisely, an $n$-dimensional
spherically symmetric manifold $M^{\ast}$ satisfying those
conditions in Definition \ref{def1} is a quotient space
$M^{\ast}=\left([0,l)\times_{f(t)}\mathbb{S}^{n-1}\right)/\sim$ with
the equivalent relation ``$\sim$" given by
\begin{eqnarray*}
(t,\xi)\sim(s,\eta)\Longleftrightarrow\left\{
\begin{array}{lll}
t=s\quad {\rm{and}} \quad \xi=\eta, \\
or\\
t=s=0.  & \quad
\end{array}
\right.
\end{eqnarray*}
This relation is natural, and we can just use
$[0,l)\times_{f(t)}\mathbb{S}^{n-1}$ to represent this quotient.
That is to say, $M^{\ast}=[0,l)\times_{f(t)}\mathbb{S}^{n-1}$ with
$f(t)$ satisfying conditions in Definition \ref{def1} is a
spherically symmetric manifold with $q$ the base point and
(\ref{2.3}) as its metric. This metric is of class $C^{k}$,
$k\geq0$, if $f\in{C}^{k}((0,l))$ and of class $C^{k+3}$ at $t=0$,
with vanishing $2d$-derivatives (i.e. even-order derivatives or
derivatives of order $2d$) at $t=0$ for all $2d\leq{k}+3$ (see
\cite{p} p.13). Besides, if $l=+\infty$, then $M^{\ast}$ has a pole
at $p=\{0\}\times_{f}\mathbb{S}^{n-1}$, and vice versa. If
$l=+\infty$ and the metric is of class $C^2$, then by proposition 38
of chapter 7 in \cite{k}, we know that geodesics emanating from $q$
are defined for all $t\in\mathbb{R}$, which implies that $M^{\ast}$
is complete by the Hopf-Rinow theorem. If $l$ is finite and
$f(l)=0$, then $M^{\ast}$ ``closes". Besides, we are able to define
a one-point compactification metric space
$\overline{M}^{\ast}=M^{\ast}\cup\{q^{\ast}\}$ by identifying all
pairs $(l,\xi)$ with a single point $q^{\ast}$, and extending the
distance function to $q^{\ast}$ such that $d(q^{\ast},(t,\xi))=l-t$,
where, for a fixed $t$, $(t,\xi)$ can be used to represent a
geodesic sphere $\partial{B}(q,t)$ of radius $t$ centered at $q$.
Furthermore, if the metric (\ref{2.3}) can be extended continuously
to the closing point, that is, at $t=l$, $f$ is $C^3$ with
$f'(l)=-1$ and $f''(l)=0$, then this one-point compactification
metric space will be a Riemannian metric space. As the case of
$t=0$, if $f$ is of class $C^{k+3}$ ($k\geq0$) at $t=l$, with
vanishing $2d$-derivatives at $t=l$ for all $2d\leq{k}+3$ (of
course, $f(l)=0$, $f'(l)=-1$ are included here), then the metric is
of $C^{k}$ at the closing point $t=l$. Arguments similar to this
part about the regularity of the model spaces, spherically symmetric
manifolds, can also be found in \cite{fmi,PhDJingMao}, but we still
would like to recall these fundamental geometric properties here,
which are necessary and convenient for us to explain and try to
prove the results of this paper. For $M^{\ast}$ and $r<l$, by
(\ref{2.2}) we have
\begin{eqnarray*}
V(B(q,r))=w_{n}\int_{0}^{r}f^{n-1}(t)dt,
 \end{eqnarray*}
  and moreover, by applying the co-area formula, the volume of the boundary $\partial{B(q,r)}$ is given by
\begin{eqnarray*}
V(\partial{B(q,r)})=w_{n}f^{n-1}(r),
\end{eqnarray*}
where $w_{n}$ denotes the $(n-1)$-volume of the unit sphere
$\mathbb{S}^{n-1}\subseteq{\mathbb{R}^{n}}$. A space form with
constant curvature $k$ is also a spherically symmetric manifold, and
in this special case we have
\begin{eqnarray*}
f(t)=\left\{
\begin{array}{llll}
\frac{\sin\sqrt{k}t}{\sqrt{k}}, & \quad  l= \frac{\pi}{\sqrt{k}}
  & \quad k>0,\\
 t, &\quad l=+\infty & \quad k=0, \\
\frac{\sinh\sqrt{-k}t}{\sqrt{-k}}, & \quad l=+\infty  &\quad k<0.
\end{array}
\right.
\end{eqnarray*}

Under some constraints on the regularity of the warping function
$f$, Freitas, Mao and Salavessa have proved an asymptotical property
for the first eigenvalue of the linear Laplacian on spherically
symmetric manifolds (cf. lemma 2.5 in \cite{fmi}). By using a
similar method, we can improve it to the case of the nonlinear
Laplace operator as follows.

\begin{lemma} \label{closedmodel}
 Assume $M$ is a generalized  space form
$[0,l)\times_f \mathbb{S}^{n-1}$ (with $q\in{M}$ as its base point)
with $f\in C^{2}([0,l))$ and  $C^3$ at $t=0$, $f(0)=f''(0)=0$,
$f'(0)=1$, closing at $t=l$, i.e. $f(l)=0$. We have

(I) in case $n=2$, if for some $\epsilon >0$, $ f\in C^1([0,
l+\epsilon))$, then $\lim_{r\to l^-} \lambda_{1,p}(B(q,r))=0$ with
$1<p\leq2$;

(II) in case $n\geq3$, if for some $\epsilon >0$, $f\in
C^{2}([0,l+\epsilon))$,
 then $\lim_{r\to l^-}
\lambda_{1,p}(B(q,r))=0$ with $1<p<3$.
\end{lemma}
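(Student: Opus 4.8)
The plan is to combine the variational characterisation~(\ref{1.2}) with a domain‑monotonicity reduction. Since $\lambda_{1,p}(B(q,r))\ge0$ and, by extension by zero, $W^{1,p}_0(B(q,r_1))\subseteq W^{1,p}_0(B(q,r_2))$ whenever $r_1<r_2<l$, the map $r\mapsto\lambda_{1,p}(B(q,r))$ is nonincreasing, so $\lim_{r\to l^-}\lambda_{1,p}(B(q,r))$ exists in $[0,\infty)$; it therefore suffices to exhibit, for $r$ close to $l$, a radial test function $u_r\in W^{1,p}_0(B(q,r))$, $u_r\not\equiv0$, whose Rayleigh quotient tends to $0$. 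On $M=[0,l)\times_f\mathbb S^{n-1}$ the metric has the warped form~(\ref{2.3}), so for $u=u(t)$ one has $|\nabla u|_g=|u'(t)|$, and after integrating over $\mathbb S^{n-1}$ the common factor $w_n$ cancels, leaving
\[
\frac{\int_{B(q,r)}|\nabla u|^{p}}{\int_{B(q,r)}|u|^{p}}=\frac{\int_0^r|u'(t)|^{p}f^{n-1}(t)\,dt}{\int_0^r|u(t)|^{p}f^{n-1}(t)\,dt}.
\]

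Fix $a\in(0,l)$ once and for all, set $c_0:=\int_0^a f^{n-1}\,dt>0$ and $L_r:=\int_a^r\frac{ds}{l-s}=\log\frac{l-a}{l-r}$, and take the logarithmically tuned cutoff
\[
u_r(t)=\begin{cases}1,& 0\le t\le a,\\[1mm] \frac{1}{L_r}\int_t^r\frac{ds}{l-s},& a\le t\le r.\end{cases}
\]
Then $u_r(a)=1$, $u_r(r)=0$, $u_r\ge0$, and on $[a,r]$ (where $r<l$ keeps $l-t$ bounded away from $0$) $u_r$ is Lipschitz with $u_r'(t)=-\bigl(L_r(l-t)\bigr)^{-1}$; being continuous, constant near $q$ and vanishing on $\partial B(q,r)$, it lies in $W^{1,p}_0(B(q,r))$ and is not identically zero. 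Its denominator is at least $c_0$, so the whole problem reduces to showing that the collar energy
\[
E(r):=\int_a^r|u_r'(t)|^{p}f^{n-1}(t)\,dt=\frac{1}{L_r^{\,p}}\int_a^r\frac{f^{n-1}(t)}{(l-t)^{p}}\,dt
\]
(which is finite for each $r<l$ since $f$ is bounded on $[a,r]$ and $l-t\ge l-r>0$) tends to $0$ as $r\to l^-$.

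Here the closing condition does the work: since $f(l)=0$ and, by hypothesis, $f$ is $C^1$ (case (I)) or $C^2$ (case (II)) on a one‑sided neighbourhood of $l$, Taylor's theorem provides a constant $C$ and $\varepsilon_0>0$ with $0<f(t)\le C(l-t)$ for $l-\varepsilon_0\le t<l$ (if $f'(l)=0$ the decay is even faster, which only helps). Splitting $\int_a^r=\int_a^{l-\varepsilon_0}+\int_{l-\varepsilon_0}^r$, the first part is a fixed finite constant, while $\int_{l-\varepsilon_0}^r\frac{f^{n-1}}{(l-t)^{p}}\,dt\le C^{n-1}\int_{l-\varepsilon_0}^r(l-t)^{\,n-1-p}\,dt$ stays bounded as $r\to l^-$ when $p<n$ and grows only like $L_r$ when $p=n$. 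Hence $E(r)=O(L_r^{-p})$ if $p<n$ and $E(r)=O(L_r^{1-p})$ if $p=n$; in either case $E(r)\to0$ because $L_r\to\infty$ and $p>1$. Since the hypotheses force $p\le n$ in both parts (in (I), $n=2$ and $p\le2$; in (II), $n\ge3$ and $p<3\le n$), this gives $\lim_{r\to l^-}\lambda_{1,p}(B(q,r))=0$, as claimed.

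The main obstacle I anticipate is precisely the choice of cutoff profile: a naive linear cutoff on a shrinking collar $[r-\delta,r]$ wastes energy and, at the threshold exponent (the case $n=2$, $p=2$), yields only a \emph{bounded} Rayleigh quotient. One must instead arrange that the normalising factor $L_r$ diverges while the weighted integral $\int_a^r f^{n-1}(l-t)^{-p}$ does not outgrow $L_r^{\,p}$, and matching these two rates against the decay bound $f(t)\le C(l-t)$ imposed by the regularity of $f$ is exactly what determines the admissible range of $p$. A minor additional remark is that higher‑order vanishing of $f$ at the closing point (e.g.\ $f'(l)=0$) never obstructs the argument, since it can only make $E(r)$ smaller.
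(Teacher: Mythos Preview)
Your argument is correct and follows the same overall strategy as the paper: plug a radial cutoff into the Rayleigh quotient~(\ref{1.2}) and control the collar energy using the Taylor bound $f(t)\le C(l-t)$ coming from $f(l)=0$. The execution differs, however. The paper treats the two cases with two different cutoffs and two different exhaustion sequences: for $n=2$ it takes a logarithmic profile on the annulus $[R_m,R_{m+1}]$ with $R_m=l-\tfrac{1}{m!}$, while for $n\ge3$ it takes a linear profile with $R_m=l-\alpha^m$ and uses a second-order Taylor expansion of $F=f^{n-1}$ (hence the $C^2$ hypothesis). Your version is more economical: a single logarithmic cutoff, anchored at a fixed inner radius $a$ and stretched out to $r$, handles all cases at once, and only the first-order bound $f(t)\le C(l-t)$ is ever used, so the $C^2$ assumption in part~(II) is not actually needed in your argument. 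The preliminary appeal to domain monotonicity is also a nice touch that the paper omits. What the paper's split buys is transparency about why the ranges $p\le2$ and $p<3$ arise from the specific cutoff choices; in your presentation this is hidden in the dichotomy $p<n$ versus $p=n$, which happens to cover exactly the stated ranges since $p\le2=n$ in~(I) and $p<3\le n$ in~(II).
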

\begin{proof}
Here we would like to follow the idea of lemma 2.5 in \cite{fmi} to
prove our lemma. More precisely, we try to find a sequence
$\{\phi_{m}\}$  with $\phi_{m}\in{W}^{1,p}_{0}\left(B(q,r)\right)$
such that $\|\phi_{m}-1\|_{1,p}\rightarrow0$ as $m\rightarrow\infty$
and $r\rightarrow{l^-}$, and $\nabla\phi_{m}$ converges to $0$ for
the same norm as $m\rightarrow\infty$ and $r\rightarrow{l^-}$. Then,
together with (\ref{1.2}), we have $\lim_{r\to l^-}
\lambda_{1,p}(B(q,r))=0$. Denote by $B_{r}:=B(q,r)$ for $r<l$, which
has a $C^2$ boundary, and by $B_{l}=M$. Set
$V(r):=|B_r|=\int_{B_r}1$. For any increasing sequence $\{R_{m}\}$
with $R_{m}\uparrow{l}$, $R_m<R_{m+1}<l$, as in \cite{fmi}, we can
define a continuous function $y_m:[0,l)\rightarrow[0,1]$, which is
given by
\begin{eqnarray*}
y_{m}(r)=\left\{
\begin{array}{lll}
1, & \quad 0\leq{r}<R_m, \\
\frac{\ln\left(\frac{l-r}{l-R_{m+1}}\right)}{\ln\left(\frac{l-R_m}{l-R_{m+1}}\right)},
& \quad R_{m}\leq{r}\leq{R_{m+1}}, \\
 0, & \quad R_{m+1}<r<l,
\end{array}
\right.
\end{eqnarray*}
for $n=2$, and
\begin{eqnarray*}
y_m(r)=\left\{\begin{array}{lll}
 1, & \quad 0 \leq r<  R_m, \\
\frac{R_{m+1}-r}{R_{m+1} -R_m}, & \quad R_m\leq r \leq
R_{m+1},\\
0, & \quad R_{m+1}< r<l,
\end{array}
\right.
\end{eqnarray*}
for $n\geq3$. Clearly,
$\phi_{m}(x):=y_{m}(r(x))\in{W}^{1,p}_{0}\left(B_{R_{m+1}}\right)$,
where $r(x)=d(q,x)$ is the distance to $q$ for $x\in{M}$. Recall
that $r(x)$ is Lipschitz continuous on all $M$ with
$|\nabla{r}|\leq1$ a.e..

Assume that $n=2$. By the assumptions on $f$ and the Taylor's
formula, we have $f(s)=\eta(s)(s-l)$ with
$\eta(s):=\int_{0}^{1}f'(l+t(s-l))dt$ a bounded function for $s$
close to $l$. Without loss of generality, choose
$\alpha_{m}=\frac{1}{m!}$ and let $R_m=l-\alpha_{m}$. Therefore, for
$1<p\leq2$, we have
\begin{eqnarray*}
\int_{M}|\phi_{m}-1|^{p}\leq\int_{M\backslash{B_{R_m}}}1^p=|M|-V(R_m)\rightarrow0,
\qquad {\rm{as}}~~ m\rightarrow\infty.
\end{eqnarray*}
Besides, since for $s$ close to $l$, $\eta(s)$ is bounded, there
exists a constant $B_1>0$ such that for $m$ large enough, we have
$|\eta(s)|\leq{B_1}$, which implies
\begin{eqnarray*}
\int_{M}|\nabla(\phi_{m}-1)|^{p}&\leq&\frac{2\pi{B_1}}{\left(\ln\left(\frac{l-R_m}{l-R_{m+1}}\right)\right)^{p}}
\int_{R_{m}}^{R_{m+1}}\frac{1}{|l-s|^{p}}(l-s)ds\\
&=& \frac{2\pi{B_1}}{\ln\left(\frac{l-R_m}{l-R_{m+1}}\right)}=
\frac{2\pi{B_1}}{\ln(m+1)}\rightarrow0, \quad {\rm{as}}
~~m\rightarrow\infty, \qquad ({\rm{when}}~p=2);\\
 && {\rm{or}} \\
&&\frac{2\pi{B_1}}{\left(\ln\left(\frac{l-R_m}{l-R_{m+1}}\right)\right)^{p}}\cdot\frac{\left(l-R_{m}\right)^{2-p}-\left(l-R_{m+1}\right)^{2-p}}{2-p}\\
&&=\frac{2\pi{B_1}\left[\left(\frac{1}{m!}\right)^{2-p}-\left(\frac{1}{(m+1)!}\right)^{2-p}\right]}{(2-p)\left(\ln(m+1)\right)^{p}}\rightarrow0,
\quad {\rm{as}}~~m\rightarrow\infty, \quad ({\rm{when}}~1<p<2).
\end{eqnarray*}
Hence, together with (\ref{1.2}), we have $\lim_{r\to l^-}
\lambda_{1,p}(B(q,r))=0$ for $1<p\leq2$ as $n=2$.

Now, assume that $n\geq3$. First, by the construction of $\phi_{m}$
above, we have for $1<p<3$
\begin{eqnarray*}
\int_{M}|\phi_{m}-1|^{p}\leq\int_{M\backslash{B_{R_m}}}1^p=|M|-V(R_m)\rightarrow0,
\qquad {\rm{as}}~~ m\rightarrow\infty.
\end{eqnarray*}
On the other hand, let $F(s)=\left(f(s)\right)^{n-1}$. Then, for
$n\geq3$, $F(l)=F'(l)=0$. By applying the Taylor's formula for $s$
close to $l$, we have $F(s)=F(l)+F'(l)(l-s)+\psi(s,l)(s-l)^2$, where
\begin{eqnarray*}
\psi(s,l)=\int_{0}^{1}(1-t)F''(l+t(s-l))dt.
\end{eqnarray*}
For a sufficiently small $\epsilon>0$, there exists a constant
$B_2>0$ such that $|\psi(s,l)|\leq{B_2}$ for $|l-s|<\epsilon$. Let
$R_m=l-\alpha^{m}$ with $0<\alpha<1$ a sufficiently small constant,
and then, for $1<p<3$, we have
\begin{eqnarray*}
\int_{M}|\nabla(\phi_{m}-1)|^{p}&\leq&\frac{V(R_{m+1})-V(R_m)}{\left(R_{m+1}-R_{m}\right)^{p}}=\frac{w_{n}}{\left(R_{m+1}-R_{m}\right)^{p}}
\int_{R_m}^{R_{m+1}}\psi(s,l)(s-l)^{2}ds\\
&\leq&\frac{w_{n}B_2}{\left(R_{m+1}-R_{m}\right)^{p}}
\int_{R_m}^{R_{m+1}}(s-l)^{2}ds=\frac{w_{n}B_2}{\left(\alpha^{m}-\alpha^{m+1}\right)^{p}}
\int_{\alpha^{m+1}}^{\alpha^{m}}s^{2}ds\\
&=&\frac{w_{n}B_{2}(1-\alpha^{3})}{3(1-\alpha)^{p}}\alpha^{m(3-p)}\rightarrow0,\qquad
{\rm{as}}~~m\rightarrow\infty.
\end{eqnarray*}
Hence, together with (\ref{1.2}), we have $\lim_{r\to l^-}
\lambda_{1,p}(B(q,r))=0$ for $1<p<3$ as $n\geq3$. Our proof is
finished.
\end{proof}

We also need the following notions, which can be found in
\cite{fmi,PhDJingMao}.
\begin{defn} \label{def2}
 Given a continuous function
$k:[0,l)\rightarrow \mathbb{R}$, we say that $M$ has  a radial Ricci
curvature lower bound $(n-1)k$ along any unit-speed minimizing
geodesic starting from a point $q\in{M}$ if
\begin{eqnarray}  \label{2.4}
{\rm{Ricci}}(v_x,v_x)\geq(n-1)k(r(x)), ~~\forall x\in
M\backslash{Cut(q)},
\end{eqnarray}
where ${\rm{Ricci}}$ is the Ricci curvature of $M$.
\end{defn}

\begin{defn} \label{def3}
Given a continuous function $k:[0,l)\rightarrow \mathbb{R}$, we say
that $M$ has a radial sectional curvature upper bound $k$ along any
unit-speed minimizing geodesic starting from a point $q\in{M}$ if
\begin{eqnarray} \label{2.5}
K(v_{x},V)\leq{k(r(x))},  ~~\forall x\in M\backslash{Cut(q)},
\end{eqnarray}
where $V\perp{v_{x}}$, $V\in{S^{n-1}_{x}}\subseteq{T_{x}M}$, and
$K(v_{x},V)$ is the sectional curvature of the plane spanned by
$v_x$ and $V$.
\end{defn}

\begin{remark} \rm{
As pointed out in remark 2.4 of \cite{fmi} or remark 2.1.5 of
\cite{PhDJingMao}, for $x=\gamma_{\xi}(t)$, since $r(x)=d(q,x)=t$
and $\frac{d}{dt}|_{x}=\nabla{r(x)}=v_x$, we know that the
inequalities (\ref{2.4}) and (\ref{2.5}) become
${\rm{Ricci}}(\frac{d}{dt},\frac{d}{d{t}}) \geq(n-1)k(t)$ and
$K(\frac{d}{dt},V)\leq{k(t)}$, respectively. Besides, for
convenience, if a manifold satisfies (\ref{2.4}) (resp.,
(\ref{2.5})), then we say that $M$ has \emph{a radial Ricci
curvature lower bound w.r.t. a point $q$} (resp., \emph{a radial
sectional curvature upper bound w.r.t. a point $q$}), that is to
say, its \emph{radial Ricci curvature is bounded from below w.r.t.
$q$} (resp., \emph{radial sectional curvature is bounded from above
w.r.t. $q$}).
 }
\end{remark}

For a prescribed $n$-dimensional complete manifold $M$, we would
like to construct the optimal continuous functions $k_{\pm}(q,t)$
w.r.t. a given base point $q\in{M}$, satisfying Definitions
\ref{def2} and \ref{def3}, respectively. We first recall that, for
$\xi\in{S}_{q}^{n-1}\subseteq{T_{q}M}$,
$\gamma_{\xi}(t)=\exp_{q}(t\xi)$ and its derivative
$\gamma'_{\xi}(t)$ are depending smoothly on the variables
$(t,\xi)$. Let
$\mathbb{D}_{q}:=\{(t,\xi)\in[0,\infty)\times{S_{q}^{n-1}}|0\leq
t<d_{\xi}\}$ with closure
$\overline{\mathbb{D}}_{q}=\{(t,\xi)\in[0,\infty)\times{S_{q}^{n-1}}|0\leq
t\leq d_{\xi}\}$. Then we can define
\begin{eqnarray} \label{extraII}
k_{-}(q,t):=\min\limits_{\{\xi|(t,\xi)\in\overline{\mathbb{D}}_{q}\}}
\frac{{\rm{Ricci}}_{\gamma_{\xi}(t)}\left(\frac{d}{dt}|_{\exp_{q}(t\xi)},\frac{d}{dt}|_{\exp_{q}(t\xi)}\right)}{n-1},
\qquad 0\leq t<l(q),
\end{eqnarray}
and
 \begin{eqnarray} \label{extraIII}
k_{+}(q,t):=\max_{\{(\xi,V)|
\gamma'_{\xi}(t)\perp{V}\}}K_{\gamma_{\xi}(t)}\left(\frac{d}{dt}\Big{|}_{\exp_{q}(t\xi)},V\right),
\qquad 0\leq t<inj(q).
\end{eqnarray}
If $l(q)<+\infty$, the above functions can be continuously extended
to $t=l(q)$ and $t=inj(q)$, respectively. Furthermore, if $M$ is
closed, the injectivity radius $inj(M):=\min_{q\in{M}}inj(q)$ of $M$
is a positive constant. Clearly, in this case $k_{\pm}(q,t)$ are
continuous, which can be obtained by applying the uniform continuity
of continuous functions on compact sets. Therefore, for a bounded
domain $\Omega\subseteq{M}$, one can always find optimally
continuous bounds $k_{\pm}(q,t)$ for the radial sectional and Ricci
curvatures w.r.t. some point $q\in\Omega$. This implies that the
assumptions on curvatures in Definitions \ref{def2} and \ref{def3}
are natural and advisable. Especially, when $M$ is a complete
surface, then $k_{\pm}(q,t)$ defined by (\ref{extraII}) and
(\ref{extraIII}) are actually the minimum and maximum of the
Gaussian curvature on geodesic circles centered at $q$ of radius $t$
on $M$.

Now, we would like to give explicit expressions of the radial
sectional and Ricci curvatures for any spherically symmetric
manifold. To this end, we should use some facts about the warped
product given in \cite{k,p}.

By proposition 42 and corollary 43 of chapter 7 in \cite{k} or
subsection 3.2.3 of chapter 3 in \cite{p}, we know that the radial
sectional curvature, and the radial component of the Ricci tensor of
the spherically symmetric manifold
$M^{\ast}=[0,l)\times_{f(t)}\mathbb{S}^{n-1}$ with the base point
$q$ are given by
\begin{equation} \label{radialcurvatures}
\begin{array}{ll}
K(V,\frac{d}{dt})=R(\frac{d}{d t},V,\frac{d}{d t}, V)
=-\frac{f''(t)}{f(t)}&\mbox{~for}~~ V\in T_{\xi}\mathbb{S}^{n-1},
~|V|_{g}=1,\\
{\rm{Ricci}}(\frac{d}{d t},\frac{d}{d t})=-(n-1)\frac{f''(t)}{f(t)}.
\end{array}
\end{equation}
Thus, Definition  \ref{def1} (resp., Definition {\ref{def2}}) is
satisfied with equality in (\ref{2.4}) (resp., (\ref{2.5})) and
$k(t)=-f''(t)/f(t)$. From (\ref{radialcurvatures}), we know that, in
order to define curvature tensor away from $q$, we need to require
$f\in{C}^{2}\left((0,l)\right)$. Furthermore, if $f''(0)=0$, and $f$
is $C^3$ at $t=0$, then we have $\lim_{t\rightarrow0}k(t)=-f'''(0)$.
Although $\nabla{r}$ is not defined at $x=q$, $k(t)$ is usually
required to be continuous at $t=0$, which is equal to require $f$ to
be $C^3$ at $t=0$. When $n=2$, $M^{\ast}$ is a surface, and if
$|f'(t)|\leq1$, then the mapping
\begin{eqnarray*}
\phi(t,\theta)=\left(f(t)\cos\theta,f(t)\sin\theta,h(t)\right),
\end{eqnarray*}
with $h(t)=\int_{0}^{t}\sqrt{1-(f'(t))^{2}}$, defines an isometric
embedding of $M^{\ast}$ into a surface of revolution in
$\mathbb{R}^{3}$. If the Gaussian curvature of $M^{\ast}$ is
negative at $q$, then no such local embedding exists near the base
point $q$, since $f'(t)>1$ near $t=0$ (see
(\ref{radialcurvatures})).

Define a function $\widetilde{\theta}(t,\xi)$ on
$M\backslash{Cut(q)}$ as follows
\begin{eqnarray} \label{functiontheta}
\widetilde{\theta}(t,\xi)=\left[\frac{J(t,\xi)}{f(t)}\right]^{n-1}.
\end{eqnarray}
Then we have the following \emph{generalized Bishop's volume
comparison} results, which correspond to theorem 3.3, corollary 3.4,
and theorem 4.2 in \cite{fmi} (equivalently, theorem 2.2.3,
corollary 2.2.4 and theorem 2.3.2 in \cite{PhDJingMao}).
\begin{theorem} (\cite{fmi,PhDJingMao}, generalized Bishop's volume comparison theorem I) \label{BishopI}
Given $\xi \in S_q^{n-1}\subseteq{T_{q}M}$, and a model space
$M^-=[0,l)\times_f \mathbb{S}^{n-1}$ w.r.t. $q^-$,
 under the curvature assumption on the radial Ricci tensor,
$\mathrm{Ricci}(\nu_x,\nu_x)\geq -(n-1)f''(t)/f(t)$ on $M$, for
$x=\gamma_{\xi}(t)=\exp_{q}(t\xi)$
 with $t<\min\{d_{\xi}, l\}$,  the function
$\widetilde{\theta}$ is nonincreasing in $t$. In particular, for all
$t<\min\{d_\xi, l\}$~ we have $J(t,\xi)\leq f(t)$. Furthermore, this
inequality is strict for all $t\in (t_0,t_1]$, with $0\leq
t_0<t_1<\min \{d_{\xi},l\}$,
 if the above  curvature
assumption holds with
 a strict inequality for $t$ in the same interval. Besides, we have
$$V(B(q,r_0))\leq V(V_n(q^{-},r_0)),$$
with equality if and only if $B(q,r_0)$ is isometric to
$V_n(q^{-},r_0)$.
\end{theorem}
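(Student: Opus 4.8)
The plan is to reduce everything to a one-dimensional Sturm/Wronskian comparison between the function $J(\cdot,\xi)$ on $M$ and the warping function $f$ of the model $M^-$. First I would record the two differential facts at hand. Reading the hypothesis $\mathrm{Ricci}(\nu_x,\nu_x)\ge-(n-1)f''(t)/f(t)$ along the geodesic $\gamma_\xi$ --- for $x=\gamma_\xi(t)$ one has $r(x)=t$ and $\nu_x=\gamma'_\xi(t)$ --- and combining with (\ref{J''})--(\ref{J0}) and the positivity $J(t,\xi)>0$ for $0<t<d_\xi$ (since $\exp_q$ is a diffeomorphism on $\mathcal{D}_q$, so $\det\mathbb{A}(t,\xi)>0$ there), one obtains $J''\le\frac{f''}{f}J$ with $J(0,\xi)=0$, $J'(0,\xi)=1$; on the other hand, by (\ref{radialcurvatures}) the radial Ricci tensor of the model is exactly $-(n-1)f''/f$, so $f$ trivially satisfies $f''=\frac{f''}{f}f$ with $f(0)=0$, $f'(0)=1$, and $f>0$ on $(0,l)$. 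Introduce the Wronskian $W(t):=J'(t,\xi)f(t)-J(t,\xi)f'(t)$. A direct computation gives $W'(t)=f(t)\bigl(J''(t,\xi)-\tfrac{f''(t)}{f(t)}J(t,\xi)\bigr)\le0$ on $(0,\min\{d_\xi,l\})$, while $W(0)=J'(0)f(0)-J(0)f'(0)=0$; hence $W\le0$ on that interval.

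Since $\dfrac{d}{dt}\!\left(\dfrac{J(t,\xi)}{f(t)}\right)=\dfrac{W(t)}{f(t)^2}\le0$, the ratio $J/f$ is nonincreasing in $t$, and therefore so is $\widetilde{\theta}=(J/f)^{n-1}$, because $s\mapsto s^{n-1}$ is nondecreasing on $[0,\infty)$ and $J/f>0$. Since $J(0)=f(0)=0$ with $J'(0)=f'(0)=1$, the ratio $J/f$ tends to $1$ as $t\to0^+$, so monotonicity forces $J(t,\xi)\le f(t)$ for all $t<\min\{d_\xi,l\}$. For the strict part, if the curvature hypothesis holds with strict inequality for $t\in(t_0,t_1)$ then $W'<0$ there, so $W$ is strictly decreasing on $[t_0,t_1]$ and $W(t)<W(t_0)\le0$ for $t\in(t_0,t_1]$; hence $J/f$ is strictly decreasing on $[t_0,t_1]$, and combining with $J(t_0)/f(t_0)\le1$ from the non-strict case gives $J(t,\xi)<f(t)$ for every $t\in(t_0,t_1]$.

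For the volume inequality I would simply integrate the pointwise bound: by (\ref{2.2}) and (\ref{J}), $V(B(q,r_0))=\int_{S_q^{n-1}}\int_0^{\min\{r_0,d_\xi\}}J^{n-1}(t,\xi)\,dt\,d\sigma$, whereas $V(V_n(q^{-},r_0))=w_n\int_0^{r_0}f^{n-1}(t)\,dt$; since $0\le J^{n-1}\le f^{n-1}$ on the overlapping range and $f^{n-1}\ge0$ past $d_\xi$, the first integral does not exceed the second. The equality case is the delicate point. Equality forces $d_\xi\ge r_0$ for $\sigma$-a.e.\ $\xi$ (so $B(q,r_0)$ meets $Cut(q)$ in a null set) and $J(t,\xi)=f(t)$ for all $t<r_0$ by continuity; since $J\equiv f$ then $J''/J=f''/f$, which together with $J''/J\le-\tfrac{1}{n-1}\mathrm{Ricci}(\gamma'_\xi,\gamma'_\xi)\le f''/f$ forces equality in (\ref{J''}), hence in the Bochner inequality $(\Delta r)^2/(n-1)\le|\mathrm{Hess}\,r|^2$, so that $\mathrm{Hess}\,r=\tfrac{\Delta r}{n-1}\bigl(\langle\cdot,\cdot\rangle_M-dr\otimes dr\bigr)$, i.e.\ the geodesic spheres about $q$ are totally umbilic with the same mean curvature as those of $M^-$; with (\ref{2.1}) this pins the metric of $B(q,r_0)$ down to $dt^2+f(t)^2|d\xi|^2$, so $B(q,r_0)$ is isometric to $V_n(q^{-},r_0)$, the converse being immediate from (\ref{radialcurvatures}). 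I expect the rigidity step to be the main obstacle: propagating equality backwards through the Bochner/Riccati identities to recover umbilicity of geodesic spheres and then integrating it to identify the warped-product metric; the rest is the standard Sturm comparison.
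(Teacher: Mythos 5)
Your proposal is correct and takes essentially the approach the paper itself points to: Theorem \ref{BishopI} is quoted from \cite{fmi,PhDJingMao} without a reproduced proof, but the paper explicitly states that (\ref{J''}) and (\ref{J0}) are the fundamental ingredients of its derivation, and your Wronskian/Sturm comparison of $J(\cdot,\xi)$ with $f$ giving $(J/f)'\le 0$, followed by integration of $J^{n-1}\le f^{n-1}$ over $\mathcal{D}_q$ and the standard Bochner--Riccati rigidity argument (equality in (\ref{J''}) forcing umbilic geodesic spheres and hence the warped-product metric) is exactly that derivation. No gaps.
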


\begin{theorem} (\cite{fmi,PhDJingMao}, generalized Bishop's volume comparison theorem II) \label{BishopII}
Assume $M$ has a radial sectional curvature upper bound
$k(t)=-\frac{f''(t)}{f(t)}$ w.r.t. $q\in{M}$ for
$t<\beta\leq\min\{inj_{c}(q),l\}$, where
$inj_{c}(q)=\inf_{\xi}c_{\xi}$, with $\gamma_{\xi}(c_{\xi})$  a
first conjugate point along the geodesic
$\gamma_{\xi}(t)=\exp_{q}(t\xi)$. Then on $(0,\beta)$
\begin{eqnarray}  \label{4.18}
\left(\frac{\sqrt{|g|}}{f^{n-1}}\right)'\geq0, \quad\quad
\sqrt{|g|}(t)\geq{f^{n-1}(t)},
 \end{eqnarray}
and  equality occurs in the first inequality at $t_{0}\in(0,\beta)$
if and only if
 \begin{eqnarray*}
\mathcal{R}=-\frac{f''(t)}{f(t)}, \quad \mathbb{A}=f(t)I,
 \end{eqnarray*}
 on all of $[0,t_{0}]$.
\end{theorem}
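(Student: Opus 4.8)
The plan is to reduce both inequalities in (\ref{4.18}) to a single pointwise estimate for the shape operators of the geodesic spheres about $q$, and to derive that estimate from a matrix Riccati comparison. Recall from the set-up of this section that $\sqrt{|g|}=\det\mathbb{A}(t,\xi)$ and that $\mathbb{A}=\mathbb{A}(t,\xi)$ solves the Jacobi equation $\mathbb{A}''+\mathcal{R}\mathbb{A}=0$ with $\mathbb{A}(0,\xi)=0$, $\mathbb{A}'(0,\xi)=I$. Since $t<\beta\le inj_{c}(q)$, there is no conjugate point of $q$ along $\gamma_{\xi}$ on $(0,\beta)$, so $\mathbb{A}(t,\xi)$ is invertible there and $U:=\mathbb{A}'\mathbb{A}^{-1}$ is well defined on $(0,\beta)$; it is self-adjoint because $\mathbb{A}^{\top}\mathbb{A}'-(\mathbb{A}')^{\top}\mathbb{A}$ has vanishing $t$-derivative (here one uses $\mathcal{R}=\mathcal{R}^{\top}$) and vanishes at $t=0$. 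Differentiating $U$ and invoking the Jacobi equation, $U$ satisfies the matrix Riccati equation $U'+U^{2}+\mathcal{R}=0$ on $(0,\beta)$, whereas on the model $\mathbb{A}_{f}=f(t)I$ gives the scalar identity $u'+u^{2}-f''/f=0$ for $u:=f'/f$. Finally, the curvature hypothesis $K(\gamma'_{\xi}(t),V)\le k(t)=-f''(t)/f(t)$ for all unit $V\perp\gamma'_{\xi}(t)$ is, by the very definition of the Jacobi operator, the operator inequality $\mathcal{R}(t)\le-\frac{f''(t)}{f(t)}I$ on $\xi^{\perp}$.

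I would then introduce $W:=U-uI$, which is self-adjoint, and $Q:=-\big(\mathcal{R}+\tfrac{f''}{f}I\big)\ge0$; subtracting the two Riccati identities and using that $uI$ is central yields $W'+W^{2}+2uW=Q$, equivalently $(f^{2}W)'=f^{2}(Q-W^{2})$, on $(0,\beta)$. The core of the proof is the comparison $W(t)\ge0$ for all $t\in(0,\beta)$. The initial data at the singular endpoint is in order: both $U$ and $uI$ are asymptotic to $\tfrac1tI$, and from $\mathbb{A}(t,\xi)=tI-\tfrac{t^{3}}{6}\mathcal{R}(0)+o(t^{3})$ and $f(t)=t-\tfrac{k(0)}{6}t^{3}+o(t^{3})$ one gets $W(t)=\tfrac{t}{3}Q(0)+o(t)$, whose leading coefficient is $\ge0$; in particular $W(t)\to0$ as $t\to0^{+}$. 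Then a continuity argument along $t$: if $W$ were to lose positivity, then at a first time $t_{0}$ where $\ker W(t_{0})\neq\{0\}$, picking $v\in\ker W(t_{0})$ with $|v|=1$ and using $W(t_{0})v=0$ to annihilate the quadratic term in the Riccati equation, one finds $\langle W'(t_{0})v,v\rangle=\langle Q(t_{0})v,v\rangle\ge0$; a propagation argument that uses $Q\ge0$ \emph{on the whole interval} then rules out the subsequent loss of positivity. This is the classical matrix Riccati/Sturm comparison, carried out e.g. as in \cite{p} or in the proof of theorem 4.2 of \cite{fmi}.

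Granting $W\ge0$ on $(0,\beta)$, the conclusions are immediate. Since $\big(\ln\tfrac{\sqrt{|g|}}{f^{n-1}}\big)'=\operatorname{tr}U-(n-1)u=\operatorname{tr}W\ge0$ there, the ratio $\sqrt{|g|}/f^{n-1}$ is nondecreasing on $(0,\beta)$ — the first inequality of (\ref{4.18}); and as $\sqrt{|g|}/f^{n-1}\to1$ when $t\to0^{+}$, monotonicity forces $\sqrt{|g|}(t)\ge f^{n-1}(t)$, the second. For the equality discussion, suppose $\big(\sqrt{|g|}/f^{n-1}\big)'(t_{0})=0$ for some $t_{0}\in(0,\beta)$; then $\operatorname{tr}W(t_{0})=0$, and together with $W(t_{0})\ge0$ this forces $W(t_{0})=0$. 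Running the estimate backwards from $t_{0}$ — using that $W\ge0$ implies $\operatorname{tr}(W^{2})\le(\operatorname{tr}W)^{2}$, so $\varphi:=\operatorname{tr}W\ge0$ obeys $\varphi'\ge-\varphi(\varphi+2u)$ with $\varphi(t_{0})=0$, and comparing with the zero solution — gives $\varphi\equiv0$, hence $W\equiv0$, on $(0,t_{0}]$. Consequently $(\mathbb{A}f^{-1})'=\tfrac1fW\mathbb{A}=0$ there, so $\mathbb{A}=f(t)I$ on $[0,t_{0}]$ (the value at $0$ being pinned by the initial conditions), and then the Jacobi equation gives $\mathcal{R}=-f''(t)/f(t)$ on $[0,t_{0}]$; the converse is clear, since $\mathbb{A}=f(t)I$ yields $\sqrt{|g|}=f^{n-1}$. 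The step I expect to be the main obstacle is precisely the comparison $W\ge0$, and inside it the treatment of the singular initial condition at $t=0$, where $U$ and $uI$ individually blow up so that the required positivity has to be extracted from the Taylor expansions of $\mathbb{A}$ and $f$ together with the hypothesis $Q\ge0$.
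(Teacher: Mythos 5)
The paper does not actually prove Theorem \ref{BishopII}: it is quoted verbatim from \cite{fmi,PhDJingMao} (it corresponds to theorem 4.2 of \cite{fmi}), so there is no in-paper proof to compare against. Your argument is the standard one for this Günther/Rauch-type comparison — pass to the shape operator $U=\mathbb{A}'\mathbb{A}^{-1}$, which exists and is self-adjoint before the first conjugate point, translate the sectional-curvature upper bound into the operator inequality $\mathcal{R}\leq -\frac{f''}{f}I$, and run a matrix Riccati comparison for $W=U-uI$ — and all the surrounding computations (the Riccati equations, the expansion $W(t)=\frac{t}{3}Q(0)+o(t)$ resolving the singular initial condition, the identity $\bigl(\ln(\sqrt{|g|}/f^{n-1})\bigr)'=\operatorname{tr}W$, and the rigidity discussion via $\operatorname{tr}W(t_0)=0\Rightarrow W(t_0)=0$ followed by backward Gronwall and $(\mathbb{A}f^{-1})'=f^{-1}W\mathbb{A}$) are correct. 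This is exactly the route one would expect the cited sources to take, and you correctly identify that the trace inequality $\operatorname{tr}U\geq(n-1)u$ cannot be obtained by comparing individual Jacobi fields, so the full matrix comparison is genuinely needed.

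The one place your write-up is not yet a proof is the forward propagation of $W\geq0$. As literally stated — at a first time $t_0$ with $\ker W(t_0)\neq\{0\}$ one checks $\langle W'(t_0)v,v\rangle=\langle Q(t_0)v,v\rangle\geq0$ — the argument is inconclusive: a nonnegative (indeed vanishing) derivative at a touching point does not prevent $\langle W(t)v(t),v(t)\rangle$ from becoming negative immediately afterwards (compare $w(t)=-(t-t_0)^2$). To close this you need the standard device: either perturb ($Q\mapsto Q+\epsilon I$, or compare with the solution for $f_\epsilon$ built from $k+\epsilon$, where the inequality at the touching point becomes strict, then let $\epsilon\to0$), or use the Eschenburg--Heintze integrating-factor form of the Riccati comparison, e.g.\ solve $g'=-\tfrac12(W+2uI)g$ and observe $(g^{\top}Wg)'=g^{\top}Qg\geq0$ together with $g^{\top}Wg\to0$ at $t=0$. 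Since you explicitly flag this step and point to \cite{p}, I would call this a presentational gap rather than a wrong idea — but note that citing ``the proof of theorem 4.2 of \cite{fmi}'' for it is circular, as that theorem is precisely the statement being proved; cite the Riccati comparison lemma in \cite{p} instead and spell out one of the two closings above.
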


\section{A Cheng-type isoperimetric inequality for the p-Laplace operator}
\renewcommand{\thesection}{\arabic{section}}
\renewcommand{\theequation}{\thesection.\arabic{equation}}
\setcounter{equation}{0} \setcounter{maintheorem}{0}

We need the following proposition, which will be used in the proof
of Theorem \ref{theorem1} below.

\begin{proposition} \label{proposition1}
Let $T(t)$ be any solution of
\begin{eqnarray}  \label{3.1}
\left[|T'|^{p-2}f(t)^{n-1}T'\right]'+\lambda{f(t)^{n-1}}T|T|^{p-2}=0,
\qquad 1<p<\infty,
\end{eqnarray}
where $f(t)>0$ on the interval $(0,\beta)$. Then for $\Re=T'$ we
have that $\Re|(0,\beta]<0$ whenever we are given that
$T|(0,\beta)>0$, and $\lambda>0$.
\end{proposition}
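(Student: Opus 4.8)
The plan is to analyze the sign of $\Re = T'$ by exploiting the structure of the ODE (\ref{3.1}), viewed as a conservation-type law. First I would observe that (\ref{3.1}) can be rewritten as $\big[|T'|^{p-2}f^{n-1}T'\big]' = -\lambda f^{n-1}T^{p-1}$ on $(0,\beta)$, where I write $T^{p-1}$ for $T|T|^{p-2}$ since $T>0$ there. Since $\lambda>0$, $f>0$, and $T>0$ on $(0,\beta)$, the right-hand side is strictly negative, so the function $\Phi(t) := |T'(t)|^{p-2}f(t)^{n-1}T'(t)$ is strictly decreasing on $(0,\beta)$. Note also that $\Phi$ and $T'$ always have the same sign (the map $s\mapsto |s|^{p-2}s$ is an increasing odd homeomorphism of $\mathbb{R}$ for $1<p<\infty$), so controlling the sign of $\Phi$ controls the sign of $\Re$.

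Next I would pin down the behaviour at the left endpoint. Because $f(0)=0$ (with $f>0$ on $(0,l)$, from Definition \ref{def1}) and $T$ is bounded near $0$, we have $f^{n-1}T^{p-1}$ integrable near $0$, and one expects $\Phi(0^+)=0$: indeed, integrating (\ref{3.1}) from $0$ gives $\Phi(t) = \Phi(0^+) - \lambda\int_0^t f^{n-1}T^{p-1}\,ds$, and a standard regularity/normalization argument at the pole forces $\Phi(0^+)=0$ (for the model-space application $T$ is the eigenfunction with $T'(0)=0$; more generally the statement is understood for solutions regular at $0$). Granting $\Phi(0^+)=0$, strict monotonicity immediately yields $\Phi(t)<0$, hence $T'(t)<0$, for all $t\in(0,\beta)$. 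This also covers the endpoint $t=\beta$: once $\Phi$ is strictly negative on $(0,\beta)$ it stays $\le$ its limiting value, and strict decrease rules out a return to $0$, so $\Re(\beta)<0$ as well (using continuity of $\Phi$ up to $\beta$, or directly from $\Phi(\beta)<\Phi(t)<0$ for any $t<\beta$).

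The main obstacle I anticipate is the endpoint analysis at $t=0$: justifying $\Phi(0^+)=0$ rigorously, since $f^{n-1}$ vanishes there and one must argue that no constant of integration survives — this is where the hypothesis $f(0)=0$ and the regularity/normalization of $T$ at the pole (the conditions $f'(0)=1$, $f''(0)=0$, $T$ smooth at $0$ from Definition \ref{def1}) are essential. A clean way around it is to argue by contradiction: if $T'(t_*)\ge 0$ at some $t_*\in(0,\beta)$, then since $\Phi$ is strictly decreasing, $\Phi(t)>\Phi(t_*)\ge 0$ for all $t<t_*$, so $T'>0$ on $(0,t_*)$; but then $\int_0^{t_*} f^{n-1}T^{p-1}ds$ is a fixed positive number while $\Phi(t)\to$ a nonnegative limit as $t\to 0^+$ that must equal $\Phi(t_*) + \lambda\int_0^{t_*}f^{n-1}T^{p-1}ds>0$, contradicting the vanishing of $|T'|^{p-2}f^{n-1}T'$ forced by $f(0)=0$ and boundedness of $T'$ near $0$. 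Hence $T'<0$ throughout $(0,\beta]$, which is the claim.
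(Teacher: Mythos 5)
Your proposal is correct and follows essentially the same route as the paper: the paper's (one-line) proof simply integrates (\ref{3.1}) from $0$ to $t$ to get $|T'|^{p-2}f^{n-1}T'=-\lambda\int_0^t f^{n-1}T|T|^{p-2}\,ds<0$ and reads off the sign of $T'$. Your version merely makes explicit the two points the paper leaves implicit --- that $s\mapsto|s|^{p-2}s$ preserves sign, and that the boundary term $|T'|^{p-2}f^{n-1}T'\big|_{t=0^+}$ vanishes (via $f(0)=0$ and the regularity of $T$ at the pole) --- which is a welcome tightening of the same argument.
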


\begin{proof}  Since $f(t)>0$ on the interval $(0,\beta)$,
and
\begin{eqnarray*}
|T'|^{p-2}f(t)^{n-1}T'(t)=-\lambda\int^{t}_{0}f(t)^{n-1}T|T|^{p-2}dt,
\end{eqnarray*}
the claim of the proposition follows.
\end{proof}

Denote by $B(q,r_{0})$ the open geodesic ball with center $q$ and
radius $r_{0}$ of an $n$-dimensional Riemannian manifold $M$ with
\emph{a radial Ricci curvature lower bound $(n-1)k(t)$ w.r.t. a
point $q\in{M}$}, and let $V_{n}(q^{-},r_{0})$ be the geodesic ball
with center $q^{-}$ and radius $r_{0}$ of an $n$-dimensional
spherically symmetric manifold $M^{-}$ with respect to the point
$q^{-}$ defined by $M^{-}:=[0,l)\times_{f(t)}\mathbb{S}^{n-1}$ with
$f(t)$ obtained by solving the initial value problem
\begin{eqnarray*}
\left\{
\begin{array}{lll}
-f''(t)=k(t)f(t), \qquad 0<t<r_{0}, \qquad f|(0,r_{0})>0,\\
f(0)=0,\\
f'(0)=1.  & \quad
\end{array}
\right.
\end{eqnarray*}
 We always assume $r_{0}<\min\{l(q),l\}$ with $l(q)$ defined in (\ref{important}).
In fact, we can prove the following.

\begin{theorem} \label{theorem1}
Suppose $M$ is a
 complete $n$-dimensional Riemannian manifold with a radial Ricci curvature
 lower bound $(n-1)k(t)=-\frac{(n-1)f''(t)}{f(t)}$ w.r.t. a point $q$,
 and  $M^{-}$ is an $n$-dimensional spherically symmetric manifold with respect to
a point $q^{-}$ whose metric is given by (\ref{2.3}).
  Then, for $1<p<\infty$, we have
\begin{eqnarray} \label{3.2}
\lambda_{1,p}(B(q,r_{0}))\leq\lambda_{1,p}(V_{n}(q^{-},r_{0})),
\end{eqnarray}
where $\lambda_{1,p}(\cdot)$ denotes the first Dirichlet eigenvalue
of the $p$-Laplacian of the corresponding geodesic ball. Moreover,
the equality holds if and only if $B(q,r_{0})$ is isometric to
$V_{n}(q^{-},r_{0})$.
\end{theorem}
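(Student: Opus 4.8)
The plan is to adapt the classical Cheng-type argument (as in Theorem 3.6 of \cite{fmi}) to the nonlinear $p$-Laplace setting. By the variational characterization (\ref{1.2}), it suffices to construct a test function $u$ on $B(q,r_0)$ from the first eigenfunction of $V_n(q^-,r_0)$ such that the Rayleigh quotient of $u$ does not exceed $\lambda_{1,p}(V_n(q^-,r_0))$. Let $T(t)$ be the radial profile of the first positive eigenfunction on $V_n(q^-,r_0)$; it satisfies the ODE (\ref{3.1}) with $\lambda = \lambda_{1,p}(V_n(q^-,r_0))$, together with $T(0)>0$, $T(r_0)=0$, and by Proposition \ref{proposition1} we have $T'<0$ on $(0,r_0]$, so $T$ is positive and strictly decreasing on $[0,r_0)$. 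Define $u(x) = T(r(x))$ on $B(q,r_0)$, where $r(x)=d(q,x)$; since $T(r_0)=0$ and $r$ is Lipschitz with $|\nabla r|=1$ a.e.\ away from $q$ and $Cut(q)$, this gives a valid function in $W^{1,p}_0(B(q,r_0))$, with $|\nabla u| = |T'(r(x))|$ a.e.

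The core of the proof is the comparison of the two integrals in the Rayleigh quotient. Using geodesic polar coordinates around $q$ and (\ref{2.2}), one writes
\begin{eqnarray*}
\int_{B(q,r_0)} |\nabla u|^p \, dM = \int_{S_q^{n-1}} \int_0^{\min\{r_0,d_\xi\}} |T'(t)|^p\, J^{n-1}(t,\xi)\, dt\, d\sigma,
\end{eqnarray*}
and similarly for $\int_{B(q,r_0)} |u|^p\, dM$ with $|T'|^p$ replaced by $|T(t)|^p$. On the model side, $J^{n-1}$ is replaced by $f^{n-1}(t)$ and the angular integral just contributes the constant $w_n$. The key analytic input is the monotonicity identity: multiplying (\ref{3.1}) and integrating shows
\begin{eqnarray*}
\frac{\int_0^{s} |T'|^p f^{n-1}\, dt}{\int_0^{s} |T|^p f^{n-1}\, dt}
\end{eqnarray*}
is non-increasing in $s$ (this is the standard fact that the ``partial Rayleigh quotient'' decreases, proved via the ODE and the sign of $T'$ from Proposition \ref{proposition1}), so that for every $s \le r_0$ one has $\int_0^s |T'|^p f^{n-1}\,dt \le \lambda \int_0^s |T|^p f^{n-1}\,dt$. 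Then, because $|T'|^p$ is integrated against the smaller weight $J^{n-1} \le f^{n-1}$ (Theorem \ref{BishopI}) while $|T|^p$ is integrated against a weight that is at least as large where it matters, a pointwise/Fubini rearrangement — integrating the inequality $d/ds$ of the partial quotient against the angular variable and using $\widetilde\theta(t,\xi)=(J/f)^{n-1}$ non-increasing — yields
\begin{eqnarray*}
\int_{S_q^{n-1}} \int_0^{\min\{r_0,d_\xi\}} |T'|^p J^{n-1}\, dt\, d\sigma \le \lambda \int_{S_q^{n-1}} \int_0^{\min\{r_0,d_\xi\}} |T|^p J^{n-1}\, dt\, d\sigma,
\end{eqnarray*}
which by (\ref{1.2}) gives $\lambda_{1,p}(B(q,r_0)) \le \lambda = \lambda_{1,p}(V_n(q^-,r_0))$.

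The main obstacle is making the weight-swapping step rigorous: one cannot simply use $J^{n-1}\le f^{n-1}$ in the numerator and the reverse in the denominator, since no reverse inequality holds. The correct device is an Abel summation / integration-by-parts in $t$ that trades the monotonicity of $\widetilde\theta(t,\xi)$ against the monotonicity of the partial Rayleigh quotient of $T$; concretely, writing $G(s)=\int_0^s|T'|^pf^{n-1}dt - \lambda\int_0^s|T|^pf^{n-1}dt \le 0$ with $G(0)=G(r_0)=0$, and integrating $G'$ against $\widetilde\theta$ and then against $d\sigma$, one obtains the desired inequality for the $J$-weighted integrals. Care is also needed at the cut locus (handled since $Cut(q)$ has measure zero and $d_\xi \le l(q)$, with $r_0<\min\{l(q),l\}$) and at $t=0$, where $T'(0)=0$ by the ODE. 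Finally, for the equality case: if $\lambda_{1,p}(B(q,r_0))=\lambda_{1,p}(V_n(q^-,r_0))$, then all the above inequalities are equalities, forcing $\widetilde\theta \equiv 1$, i.e. $J(t,\xi)=f(t)$ for all $t<r_0$ and all $\xi$; tracing through the equality discussion in Theorem \ref{BishopI} (which gives $\mathbb{A}(t,\xi)=f(t)I$), the metric (\ref{2.1}) becomes exactly (\ref{2.3}), so $B(q,r_0)$ is isometric to $V_n(q^-,r_0)$; the converse is immediate.
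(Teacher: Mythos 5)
Your proposal is correct and follows essentially the same strategy as the paper: transplant the radial first eigenfunction $T$ of the model ball via $u=T\circ r$, write the Rayleigh quotient in geodesic polar coordinates with weight $J^{n-1}=f^{n-1}\widetilde{\theta}$, and combine the sign of $T'$ (Proposition \ref{proposition1}) with the monotonicity of $\widetilde{\theta}$ (Theorem \ref{BishopI}) through an integration by parts in $t$. The only structural difference is where that integration by parts happens: the paper differentiates $f^{n-1}\theta|\phi'|^{p-1}$ and applies the ODE (\ref{3.3}) pointwise, discarding the error term $-(n-1)|\phi'|^{p-2}\frac{f}{J}\left(\frac{J}{f}\right)'\phi'$ by its sign at each $t$, whereas you integrate the ODE once against $T$ to get $G(s)=\int_0^s\left(|T'|^p-\lambda|T|^p\right)f^{n-1}\,dt=|T'(s)|^{p-2}T'(s)T(s)f^{n-1}(s)\leq 0$ and then Abel-sum $\int\widetilde{\theta}\,dG$ against the non-increasing $\widetilde{\theta}$; both terms produced are nonpositive, and this version makes the equality analysis ($G<0$ on $(0,r_0)$ forces $d\widetilde{\theta}\equiv 0$ and $a(\xi)\equiv r_0$) quite transparent. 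One misstatement should be fixed: the partial Rayleigh quotient $\int_0^s|T'|^pf^{n-1}\big/\int_0^s|T|^pf^{n-1}$ is \emph{not} non-increasing in $s$ --- it tends to $0$ as $s\to 0^+$ because $T'(0)=0$ while $T(0)>0$, yet equals $\lambda$ at $s=r_0$ --- so monotonicity cannot be the source of $G\leq 0$. The inequality $G(s)\leq 0$ is nevertheless exactly what you need and follows from the single integration by parts displayed above (ODE tested against $T$ on $[0,s]$, with boundary term $|T'|^{p-2}T'Tf^{n-1}\leq 0$ by Proposition \ref{proposition1} and the nonnegativity of $T$); with that justification substituted, your argument is complete.
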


\begin{proof}
Let $\phi$ be the nonnegative eigenfunction of the first eigenvalue
of the Dirichlet $p$-Laplacian on $V_{n}(q^-,r_{0})$. By (\ref{1.1})
and (\ref{2.3}), the $p$-Laplacian on the spherically symmetric
manifold $M^{-}$ under the geodesic polar coordinates at $q^{-}$ is
given by
\begin{eqnarray*}
\triangle_{p}=|\nabla(\cdot)|^{p-2}\frac{d^2}{dt^{2}}+\frac{d}{dt}\left(|\nabla(\cdot)|^{p-2}\right)\frac{d}{dt}+(n-1)\frac{f'(t)}{f(t)}|\nabla(\cdot)|^{p-2}\frac{d}{dt}+
\frac{1}{f^{2}(t)}\triangle_{p,\mathbb{S}^{n-1}},
\end{eqnarray*}
where $\triangle_{p,\mathbb{S}^{n-1}}$ denotes the $p$-Laplacian on
the $(n-1)$-dimensional unit sphere $\mathbb{S}^{n-1}$. Then the
eigenfunction $\phi$ should be a radial function satisfying
\begin{eqnarray} \label{3.3}
(p-1)|\phi'(t)|^{p-2}\phi''(t)+(n-1)\frac{f'(t)}{f(t)}|\phi'(t)|^{p-2}\phi'(t)+\lambda_{1,p}\left(V_{n}(q^-,r_0)\right)|\phi(t)|^{p-2}\phi(t)=0
\end{eqnarray}
and the boundary conditions $\phi(r_{0})=0$, $\phi'(0)=0$. Clearly,
(\ref{3.3}) has the form of (\ref{3.1}).

Let $r$ be the distance to the point $q$ on $M$, and then
$\phi\circ{r}$ vanishes on the boundary $\partial{B}(q,r_{0})$.
Hence, by (\ref{1.2}), we obtain
\begin{eqnarray*}
\lambda_{1,p}(B(q,r_{0}))\leq\frac{\int|d\phi\circ{r}|^p}{\int|\phi\circ{r}|^{p}},
\end{eqnarray*}
 where we drop $B(q,r_{0})$ and volume element $dB(q,r_{0})$ for the above expression. Let
$a(\xi):=\min\{d_{\xi}$, $r_{0}\}$. Then, clearly,
$a(\xi)\leq{r_{0}}$ and $\exp_{q}(d_{\xi}\cdot\xi)$ is the cut-point
of $q$ along the geodesic $\gamma_{\xi}(t)=\exp_{q}(t\xi)$. Under
the geodesic polar coordinates $(t,\xi)$ around $q$ ,we have
\begin{eqnarray*}
\int\limits_{B(q,r_{0})}|d\phi\circ{r}|^p=\int\limits_{\xi\in\mathbb{S}^{n-1}}\left[\int\limits^{a(\xi)}_{0}
|\phi'(t)|^{p}\times{f^{n-1}(t)}\times\theta(t\xi)dt\right]d\sigma,
\end{eqnarray*}
\begin{eqnarray*}
\int\limits_{B(q,r_{0})}|\phi\circ{r}|^{p}=\int\limits_{\xi\in\mathbb{S}^{n-1}}\left[\int\limits^{a(\xi)}_{0}
|\phi(t)|^{p}\times{f^{n-1}(t)}\times\theta(t\xi)dt\right]d\sigma,
\end{eqnarray*}
where $d\sigma$ is the canonical measure of
$\mathbb{S}^{n-1}\equiv{S_{q}^{n-1}}$, and
$\theta(t\xi):=\sqrt{\det(g_{ij})}\times{f^{1-n}(t)}$.

On the other hand, since $f(t)>0$, $\phi\geq0$ for $0<t<r_{0}$, by
Proposition \ref{proposition1} we have $\phi'(t)\leq0$ for
$0<t<r_{0}$. By straightforward computation, it follows that
\begin{eqnarray}  \label{3.4}
&&\int\limits^{a(\xi)}_{0}
|\phi'(t)|^{p}\times{f^{n-1}(t)}\times\theta(t\xi)dt=-\phi|\phi'(t)|^{p-1}f^{n-1}(t)\theta(t\xi)|^{a(\xi)}_{0}+
\int\limits^{a(\xi)}_{0}\frac{\phi}{f^{n-1}(t)\theta(t\xi)}\cdot\nonumber\\
\lefteqn{ \qquad \qquad
\frac{d}{dt}\left[f^{n-1}(t)\theta(t\xi)|\phi'(t)|^{p-1}\right]f^{n-1}(t)\theta(t\xi)dt,}
\end{eqnarray}
\begin{eqnarray}  \label{3.5}
\lefteqn{\frac{1}{f(t)^{n-1}\theta(t\xi)}\frac{d}{dt}\left[f(t)^{n-1}\theta(t\xi)|\phi'(t)|^{p-1}\right]=}\nonumber \\
&&-|\phi'(t)|^{p-2}\left\{(p-1)\phi''(t)
+\left[\frac{(n-1)f'(t)}{f(t)}+\frac{\frac{d\theta(t\xi)}{dt}}{\theta(t\xi)}\right]\phi'(t)\right\}.
\end{eqnarray}
By (\ref{J}), we have
$\theta(t\xi)=\sqrt{det(g_{ij})}\times{f^{1-n}(t)}=J^{n-1}{f^{1-n}(t)}$,
which coincides with the function $\widetilde{\theta}$ defined in
(\ref{functiontheta}). Substituting this to (\ref{3.5}) results in
\begin{eqnarray}\label{3.6}
\frac{1}{f(t)^{n-1}\theta(t\xi)}\frac{d}{dt}\left[f^{n-1}(t)\theta(t\xi)|\phi'(t)|^{p-1}\right]=-|\phi'(t)|^{p-2}\cdot\nonumber\\
\left\{(p-1)\phi''(t)+\left[\frac{(n-1)f'(t)}{f(t)}+(n-1)\frac{f(t)}{J(t)}\left(\frac{J(t)}{f(t)}\right)'\right]\phi'(t)\right\}.
\end{eqnarray}
 Since $M$ has \emph{a radial Ricci curvature lower bound
$(n-1)k(t)=-(n-1)f''(t)/f(t)$ w.r.t. the point $q$}, then by Theorem
\ref{BishopI}, (\ref{J0}) and the fact $f(0)=0$, $f'(0)=1$, we have
\begin{eqnarray}  \label{3.7}
\left(\frac{J}{f}\right)'\leq0
\end{eqnarray}
for $0<t<r_{0}$.

Therefore, by (\ref{3.3}), (\ref{3.6}), (\ref{3.7}) and the
nonpositivity of $\phi'(t)$ on $(0,r_{0})$,  we have
\begin{eqnarray}  \label{3.8}
&&\int\limits^{a(\xi)}_{0}\frac{\phi}{f^{n-1}(t)\theta(t\xi)}\frac{d}{dt}\left[f^{n-1}(t)\theta(t\xi)
|\phi'(t)|^{p-1}\right]f^{n-1}(t)\cdot\theta(t\xi)dt\leq\int\limits^{a(\xi)}_{0}|\phi|^{p}\lambda_{1}(V_{n}(q^{-},r_{0}))\nonumber\\
\lefteqn{\qquad \qquad\cdot{f^{n-1}(t)}\theta(t\xi)dt.}
\end{eqnarray}
Substituting (\ref{3.8}) into (\ref{3.4}) yields
\begin{eqnarray} \label{3.9}
&&\int\limits^{a(\xi)}_{0}
|\phi'(t)|^{p}\times{f(t)^{n-1}}\times\theta(t\xi)dt\leq-\phi(a(\xi))|\phi'(a(\xi))|^{p-1}f^{n-1}(a(\xi))\cdot\theta(a(\xi)\xi)\nonumber\\
\lefteqn{\qquad \qquad \quad
+\int\limits^{a(\xi)}_{0}|\phi|^{p}\lambda_{1,p}\left(V_{n}(q^{-},r_{0})\right)f^{n-1}(t)\cdot\theta(t\xi)dt.}
\end{eqnarray}
Recall that $\phi\geq0$, and then from (\ref{3.9}) we have
\begin{eqnarray*}
\int\limits^{a(\xi)}_{0}
|\phi'(t)|^{p}\times{f(t)^{n-1}}\times\theta(t\xi)dt\leq
\int\limits^{a(\xi)}_{0}|\phi|^{p}\lambda_{1,p}(V_{n}(q^{-},r_{0}))f(t)^{n-1}\theta(t\xi)dt,
\end{eqnarray*}
and furthermore,
$$
\int\limits_{\xi\in{\mathbb{S}}^{n-1}}\left[\int\limits^{a(\xi)}_{0}
|\phi'(t)|^{p}\times{f^{n-1}(t)}\times\theta(t\xi)dt\right]d\sigma\leq
$$
 \begin{eqnarray*}
\int\limits_{\xi\in{\mathbb{S}}^{n-1}}\left[\int\limits^{a(\xi)}_{0}\lambda_{1,p}(V_{n}(q^{-},r_{0}))
|\phi|^{p}\times{f^{n-1}(t)}\times\theta(t\xi)dt\right]d\sigma,
\end{eqnarray*}
which implies
$\lambda_{1,p}(B(q,r_{0}))\leq\lambda_{1,p}(V_{n}(q^{-},r_{0}))$.

When equality holds, we have that $a(\xi)=r_{0}$ for almost all
$\xi\in{S}^{n-1}_{q}$. Hence $a(\xi)\equiv{r_{0}}$ for all $\xi$. We
can then conclude that $J(t,\xi)=f(t)$, and by Theorem
\ref{BishopI}, we know that $B(q,r_{0})$ is isometric to
$V_{n}(q^{-},r_{0})$. This completes the proof of Theorem
\ref{theorem1}.
\end{proof}

\begin{remark} \label{remark1}
\rm{We would like to point out the following facts about Theorem
\ref{theorem1}.

(1) Theorem \ref{theorem1} is sharper than theorem 1.1 in \cite{am}
or theorem 3 in \cite{h}. In fact, if an $n$-dimensional complete
Riemannian manifold $M$ has \emph{a radial Ricci curvature lower
bound $(n-1)k(t)$ w.r.t. a point $q\in{M}$}, where $k(t)$ is a
continuous function on the interval $[0,r_{0})$, and let
$k_{0}:=\inf_{0\leq{t}<{r_{0}}}k(t)$, then by Theorem \ref{theorem1}
we have
\begin{eqnarray*}
\lambda_{1,p}\left(B(q,r_{0})\right)\leq\lambda_{1,p}\left(V_{n}(q^{-},r_{0})\right)\leq
\lambda_{1,p}\left(V_{n}(k_{0},r_{0})\right),
\end{eqnarray*}
where $V_{n}(k_{0},r_{0})$ is a geodesic ball with radius $r_{0}$ in
the $n$-dimensional space form with constant curvature $k_{0}$, and
the other symbols have the same meanings as those in Theorem
\ref{theorem1}. However, by theorem 1.1 in \cite{am} or theorem 3 in
\cite{h}, one can only have
\begin{eqnarray*}
\lambda_{1,p}\left(B(q,r_{0})\right)\leq\lambda_{1,p}\left(V_{n}(k_{0},r_{0})\right).
\end{eqnarray*}
We will show this fact clearly by Example \ref{example} of the next
section.

(2) Our comparison result (\ref{3.2}) is valid regardless of the
cut-locus, since the Lebesgue measure of the cut-locus is 0 with
respect to the $n$-dimensional Lebesgue measure of the manifold $M$,
which implies that integrations over the cut-locus vanish.
 }
\end{remark}

\begin{corollary} \label{strange}
Under the curvature conditions of the previous theorem, holding for
all $t<l(q)=l$ where $M^-=[0,l)\times_f \mathbb{S}^{n-1}$, if $M$ is
closed and  $M^-$ also closes i.e. $f(l)=0$ and  satisfies the
conditions in  Lemma \ref{closedmodel},
 then  for all $\xi$, $\exp_q(l\xi)$
is a  conjugate point of $q$,
 and $\lim_{r\to l^-} \lambda_{1,p}(B(q,r))=0$ with $1<p\leq2$ in case $n=2$, or
 $\lim_{r\to l^-} \lambda_{1,p}(B(q,r))=0$ with $1<p<3$ in case $n\geq3$.
\end{corollary}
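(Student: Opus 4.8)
The plan is to combine the eigenvalue comparison of Theorem~\ref{theorem1} with the asymptotic vanishing established in Lemma~\ref{closedmodel}, after first dispatching the conjugate-point assertion. For the latter, I would argue by contrapositive: suppose $\exp_q(l\xi)$ is not a conjugate point of $q$ along $\gamma_\xi$ for some $\xi\in S_q^{n-1}$. Since the curvature hypothesis (the radial Ricci lower bound $(n-1)k(t)=-(n-1)f''(t)/f(t)$) holds for all $t<l$, and since $M^-$ closes, i.e. $f(l)=0$, one expects the Jacobi field comparison underlying Theorem~\ref{BishopI} to force $J(t,\xi)\to 0$ as $t\to l^-$; but $J(l,\xi)=0$ together with $J$ being a solution of the Jacobi equation $J''+\mathcal{R}J=0$ with $J(0,\xi)=0$ is precisely the statement that $\exp_q(l\xi)$ is conjugate to $q$ (two distinct zeros of a Jacobi field forces conjugacy, using $J>0$ on $(0,l)$ and $J(l)=0$ at the limit). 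Care is needed because $J(t,\xi)\le f(t)$ only gives the upper bound $\limsup_{t\to l^-}J(t,\xi)\le f(l)=0$; combined with $J>0$ this yields $\lim_{t\to l^-}J(t,\xi)=0$, hence $d_\xi\le l$ and in fact $l\xi$ is conjugate. Since $l(q)=\max_\xi d_\xi=l$ and the argument works for every $\xi$, all $\exp_q(l\xi)$ are conjugate points.

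For the eigenvalue limit, the key point is that the hypotheses are set up so that $M^-$ itself satisfies the assumptions of Lemma~\ref{closedmodel}: $M^-=[0,l)\times_f\mathbb{S}^{n-1}$ with $f\in C^2([0,l))$, $C^3$ at $0$, $f(0)=f''(0)=0$, $f'(0)=1$, and $f(l)=0$, plus the relevant one-sided regularity at $t=l$. Therefore Lemma~\ref{closedmodel} applies directly to the model space and gives $\lim_{r\to l^-}\lambda_{1,p}(V_n(q^-,r))=0$, in the ranges $1<p\le 2$ when $n=2$ and $1<p<3$ when $n\ge 3$. I would then invoke Theorem~\ref{theorem1}: for each $r<l(q)=l$ (so $r<\min\{l(q),l\}$ is automatic) we have $0<\lambda_{1,p}(B(q,r))\le\lambda_{1,p}(V_n(q^-,r))$, where the left inequality is just positivity of the first Dirichlet eigenvalue on a bounded domain. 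Letting $r\to l^-$ and applying the squeeze, $\lim_{r\to l^-}\lambda_{1,p}(B(q,r))=0$ in the stated $p$-ranges.

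A minor technical wrinkle is that Theorem~\ref{theorem1} was stated under $r_0<\min\{l(q),l\}$ and with $M^-$ built from the \emph{optimal} radial curvature bound; here we are instead handed a specific model $M^-$ whose warping function realizes the curvature bound with equality, so Theorem~\ref{theorem1} applies verbatim with this $M^-$ for every $r<l$. One should also note $M$ closed implies $B(q,r)$ is a genuine bounded domain with, for $r<l$, sufficiently regular boundary, so $\lambda_{1,p}(B(q,r))$ is well-defined and positive. I expect the main obstacle to be the conjugate-point claim rather than the eigenvalue limit: one must carefully pass to the limit $t\to l^-$ in the inequality $J(t,\xi)\le f(t)$ and argue that $J$, extended by $J(l,\xi)=0$, remains (a limit of) a Jacobi field so that $l\xi$ is genuinely conjugate and not merely a cut point — this uses smoothness of the Jacobi equation in $t$ up to $t=l$, which follows from $f\in C^2$ up to $l$ and the curvature identity, together with the fact that a nonzero Jacobi field vanishing at $t=0$ cannot vanish again before its first conjugate point.
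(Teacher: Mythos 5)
Your proposal is correct and follows essentially the same route as the paper: the eigenvalue limit comes from squeezing $\lambda_{1,p}(B(q,r))\leq\lambda_{1,p}(V_n(q^-,r))$ (Theorem \ref{theorem1}) against Lemma \ref{closedmodel}, and the conjugate-point claim comes from $J(t,\xi)\leq f(t)$ in Theorem \ref{BishopI} forcing $J(l,\xi)=0$. One small imprecision: $J$ is not itself a Jacobi field (it satisfies the differential inequality (\ref{J''}), not the Jacobi equation); the conjugacy follows because $J^{n-1}=\det\mathbb{A}(t,\xi)$, so $J(l,\xi)=0$ means the Jacobi determinant vanishes at $t=l$, which is exactly the paper's one-line argument.
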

\begin{proof}
The latter conclusion follows from Theorem \ref{theorem1} and Lemma
\ref{closedmodel}. Moreover, by Theorem \ref{BishopI}, we have
$J(l,\xi)=0$ for all $\xi\in{S^{n-1}_{q}}$, which implies that
$\exp_{q}(l\xi)$ is a conjugate point of $q$.
\end{proof}

\section{Estimates for the first eigenvalue of the $p$-Laplacian}
\renewcommand{\thesection}{\arabic{section}}
\renewcommand{\theequation}{\thesection.\arabic{equation}}
\setcounter{equation}{0} \setcounter{maintheorem}{0}

In this section, we would like to use Theorem \ref{theorem1} and
some other existing estimates to get bounds for the first eigenvalue
of the $p$-Laplacian of geodesic balls on a Riemannian manifold with
\emph{radial Ricci curvature bounded from below w.r.t. some point}.
Before that, we need the following concept.
\begin{defn} \label{def4}
The Cheeger constant $h(\Omega)$ of a domain $\Omega$ (with
boundary) is defined to be
\begin{eqnarray*}
h(\Omega):=\inf\limits_{\Omega'}\frac{\rm{vol}(\partial\Omega')}{\rm{vol}(\Omega')},
\end{eqnarray*}
where $\Omega'$ ranges over all open submanifolds of $\Omega$ with
compact closure in $\Omega$ and smooth boundary $\partial\Omega'$,
and $\rm{vol}(\partial\Omega')$ and $\rm{vol}(\Omega')$ denote the
volumes of $\partial\Omega'$ and $\Omega'$ respectively.
\end{defn}

\begin{theorem} (\cite{ld,h}) \label{theorem2}
 For any bounded domain $\Omega$ with
piecewise smooth boundary in a complete Riemannian manifold, we have
\begin{eqnarray*}
\lambda_{1,p}(\Omega)\geq\left(\frac{h(\Omega)}{p}\right)^p.
\end{eqnarray*}
\end{theorem}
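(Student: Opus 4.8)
The plan is to run the classical Cheeger argument adapted to the exponent $p$, starting from the Rayleigh quotient characterization (\ref{1.2}). Fix any $u\in C_0^\infty(\Omega)$ with $u\not\equiv 0$; since $C_0^\infty(\Omega)$ is dense in $W^{1,p}_0(\Omega)$, such functions suffice to realize the infimum in (\ref{1.2}). Set $v:=|u|^p$, a nonnegative Lipschitz function with compact support in $\Omega$, and for $t>0$ put $\Omega_t:=\{x\in\Omega: v(x)>t\}$. First I would apply the co-area formula to $v$ to write
\[
\int_\Omega |\nabla v| \;=\; \int_0^\infty \mathrm{vol}\bigl(\{v=t\}\bigr)\,dt .
\]
By Sard's theorem, for almost every $t>0$ the value $t$ is regular, so $\{v=t\}=\partial\Omega_t$ is a smooth hypersurface; and $\Omega_t$ has compact closure in $\Omega$ because $v$ is compactly supported. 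Hence for a.e.\ $t$ the set $\Omega_t$ is an admissible competitor in Definition \ref{def4}, giving $\mathrm{vol}(\partial\Omega_t)\geq h(\Omega)\,\mathrm{vol}(\Omega_t)$.

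Next I would integrate this pointwise (in $t$) inequality and use the layer-cake identity $\int_0^\infty \mathrm{vol}(\Omega_t)\,dt=\int_\Omega v$, obtaining
\[
\int_\Omega |\nabla v| \;\geq\; h(\Omega)\int_\Omega v \;=\; h(\Omega)\int_\Omega |u|^p .
\]
On the other hand, the chain rule gives $\nabla v = p\,|u|^{p-2}u\,\nabla u$ a.e., so $|\nabla v| = p\,|u|^{p-1}|\nabla u|$, and Hölder's inequality with conjugate exponents $\tfrac{p}{p-1}$ and $p$ yields
\[
\int_\Omega |\nabla v| \;=\; p\int_\Omega |u|^{p-1}|\nabla u| \;\leq\; p\left(\int_\Omega |u|^p\right)^{\frac{p-1}{p}}\left(\int_\Omega |\nabla u|^p\right)^{\frac{1}{p}} .
\]
Combining the two displays, dividing by $\bigl(\int_\Omega |u|^p\bigr)^{(p-1)/p}$ and raising to the $p$-th power gives $\int_\Omega|\nabla u|^p \big/ \int_\Omega |u|^p \geq (h(\Omega)/p)^p$; taking the infimum over $u$ and invoking (\ref{1.2}) finishes the proof.

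The routine ingredients that still need to be checked are the validity of the co-area formula for a Lipschitz function on a Riemannian manifold and the weak chain rule $\nabla|u|^p=p|u|^{p-2}u\,\nabla u$ (standard, and trivial for smooth $u$). The main obstacle I anticipate is the regularity bookkeeping around the level sets: one must be sure that for a.e.\ $t$ the superlevel set $\Omega_t$ genuinely is one of the open submanifolds with smooth boundary and compact closure permitted in Definition \ref{def4}. Sard's theorem together with the compact support of $u$ handles this. If one instead insists on working directly with an arbitrary $u\in W^{1,p}_0(\Omega)$, a brief approximation step (mollification, or truncation followed by smoothing) is needed to reduce to the smooth case before the level-set argument applies.
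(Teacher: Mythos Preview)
Your argument is the standard Cheeger--Lefton--Wei--Takeuchi proof and is correct as written; the only mild technical points (co-area for Lipschitz functions on a manifold, and that $|u|^p$ is $C^1$ with $\nabla|u|^p=p|u|^{p-2}u\,\nabla u$ for smooth $u$, so Sard applies to give $C^1$ level sets for a.e.\ $t$) you have already flagged and they are indeed routine. One small remark: Definition~\ref{def4} asks for a \emph{smooth} boundary $\partial\Omega'$, whereas Sard applied to $v=|u|^p$ only yields $C^1$ level sets in general; this is harmless, since a $C^1$ hypersurface can be $C^\infty$-approximated with arbitrarily small change in both volume and surface area, so the Cheeger quotient is unaffected.

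There is, however, nothing to compare your proof against: the paper does not supply its own proof of Theorem~\ref{theorem2}. It is stated as a quoted result from \cite{ld,h} and used as a black box in deriving the estimates of Theorem~\ref{theorem3}. Your write-up is therefore a complete and self-contained substitute for that citation.
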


Let $D$ vary over all smooth subdomains of $\Omega$
 whose boundary $\partial{D}$ does not
touch $\partial{\Omega}$, and define the Cheeger quotient of $D$ as
$Q(D):={\rm{vol}}(\partial{D})/{\rm{vol}}(D)$. We call a subset $w$
of $\Omega$ a Cheeger domain of $\Omega$ if $Q(w)=h(\Omega)$. The
existence, (non)uniqueness and regularity of Cheeger domains are
interesting and important topics in Differential Geometry, but here
we do not want to focus on them. Generally, it is difficult to get
the Cheeger domain for a prescribed domain on a general Riemannian
manifold. But for some special cases, it is not difficult. For
instance, the Cheeger domain $w$ for a unit square
$S_{1}\subseteq{\mathbb{R}^2}$ is a square with its corners rounded
off by circular arcs of radius $\rho=(4-2\sqrt{\pi})/(4-\pi)$, which
has been pointed out in \cite{bv}. Especially, for a ball $B_{R}$
with radius $R$ in the Euclidean $n$-space $\mathbb{R}^n$, its
Cheeger domain coincides with itself, which implies that its Cheeger
constant is $h(B_{R})=n/R$.

In \cite{grig99}, Grigor'yan has obtained estimates for the
so-called \emph{principal $p$-frequency} ($1<p<\infty$) of geodesic
balls on spherically symmetric manifolds. The principal
$p$-frequency there is actually the first eigenvalue of the
$p$-Laplacian. More precisely, if $B_{R}=V_{n}(q^-,R)$ be a geodesic
ball centered at the point $q^-$ with radius $R$ on the prescribed
$n$-dimensional spherically symmetric manifold $M^{-}$ with the
metric (\ref{2.3}), then the first eigenvalue $\lambda_{1,p}(B_{R})$
of the $p$-Laplacian of this geodesic ball satisfies
\begin{eqnarray} \label{4.1}
a_{p}m_{p}(B_{R})\leq\lambda_{1,p}(B_{R})\leq{m_{p}(B_{R})},
\end{eqnarray}
where $m_{p}(B_{R})$ and $a_{p}$ are given by
\begin{eqnarray*}
m_{p}(B_{R})=\frac{1}{\sup\limits_{r\leq{R}}\left\{\int_{0}^{r}f(t)^{n-1}dt\left[\int_{r}^{R}f(t)^{\frac{1-n}{p-1}}dt\right]^{p-1}\right\}},
\end{eqnarray*}
and
\begin{eqnarray*}
a_{p}=\left\{
\begin{array}{ll}
{(p-1)^{p-1}p^{-p}, \quad if \quad p>1,}\\
 1, \quad\quad\quad\quad\quad\quad~ if \quad p=1,
\end{array}
\right.
 \end{eqnarray*}
respectively (cf. sections 2 and 7 in \cite{grig99}).

Hence, by Theorem \ref{theorem1}, Theorem \ref{theorem2} and
(\ref{4.1}), we have the following estimates.
\begin{theorem} \label{theorem3}
Let $M$ be a
 complete $n$-dimensional Riemannian manifold with a radial Ricci curvature
 lower bound $(n-1)k(t)=-\frac{(n-1)f''(t)}{f(t)}$ w.r.t. $q\in{M}$. Then, for any
 $1<p<\infty$,
 the first Dirichlet eigenvalue $\lambda_{1,p}(B(q,R))$
 of the $p$-Laplacian of the geodesic ball $B(q,R)$ on $M$ satisfies
 \begin{eqnarray} \label{4.2}
\left(\frac{h(B(q,R))}{p}\right)^{p}\leq\lambda_{1,p}(B(q,R))\leq{m_{p}(B_{R})},
 \end{eqnarray}
 where
 $h(B(q,R))$ is the Cheeger constant of $B(q,R)$, and $m_{p}(B_{R})$ is defined in (\ref{4.1}).
 Especially, when $M=\mathbb{R}^{n}$, we have
 \begin{eqnarray} \label{4.3}
\left(\frac{n}{Rp}\right)^{p}\leq\lambda_{1,p}\left(B(R)\right)\leq{C}(n,p,R)
 \end{eqnarray}
 for any ball $B(R)\subseteq{\mathbb{R}^n}$ with radius $R$, where $C(n,p,R)$ is given by
\begin{eqnarray*}
C(n,p,R)=\left\{
\begin{array}{lll}
\frac{p^{\frac{p^2-p}{p-n}}}{n^{\frac{np-p}{p-n}}\cdot(p-1)^{p-1}\cdot{R^{p}}},
\qquad n\neq{p},
\\
\\
\frac{n^{n}e^{n-1}}{(n-1)^{n-1}\cdot{R}^{n}}, \qquad\quad\quad~ n=p.
\end{array}
\right.
\end{eqnarray*}
\end{theorem}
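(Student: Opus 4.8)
The plan is to obtain (\ref{4.2}) by simply concatenating the three facts recalled just above — Theorem \ref{theorem1}, Theorem \ref{theorem2}, and Grigor'yan's bound (\ref{4.1}) — and then to obtain (\ref{4.3}) by specializing to $\mathbb{R}^n$ and evaluating $m_p(B_R)$ explicitly.

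For the lower bound in (\ref{4.2}): since $B(q,R)$ is a bounded domain with piecewise smooth (indeed $C^2$ away from $Cut(q)$) boundary in the complete manifold $M$, Theorem \ref{theorem2} applies directly and gives $\lambda_{1,p}(B(q,R))\geq (h(B(q,R))/p)^p$. For the upper bound: let $M^-=[0,l)\times_{f(t)}\mathbb{S}^{n-1}$ be the spherically symmetric model associated with the radial Ricci lower bound as in the statement of Theorem \ref{theorem1} (so $f$ solves $-f''=kf$, $f(0)=0$, $f'(0)=1$), and assume as usual $R<\min\{l(q),l\}$. Theorem \ref{theorem1} then yields $\lambda_{1,p}(B(q,R))\leq \lambda_{1,p}(V_n(q^-,R))$, and applying (\ref{4.1}) to the geodesic ball $B_R:=V_n(q^-,R)$ on $M^-$ gives $\lambda_{1,p}(V_n(q^-,R))\leq m_p(B_R)$; chaining the two inequalities gives $\lambda_{1,p}(B(q,R))\leq m_p(B_R)$, which together with the Cheeger bound is exactly (\ref{4.2}).

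For (\ref{4.3}) I would take $M=\mathbb{R}^n$, for which $k\equiv 0$, $f(t)=t$, $M^-=\mathbb{R}^n$ and $V_n(q^-,R)=B(R)$. Since the Cheeger domain of a Euclidean ball is the ball itself, $h(B(R))=n/R$, so the left-hand side of (\ref{4.2}) becomes $(n/(Rp))^p$. It then remains to compute $m_p(B_R)$ with $f(t)=t$: one has $\int_0^r t^{n-1}\,dt=r^n/n$, and for $p\neq n$, $\int_r^R t^{(1-n)/(p-1)}\,dt=\tfrac{p-1}{p-n}\bigl(R^{(p-n)/(p-1)}-r^{(p-n)/(p-1)}\bigr)$, so that $m_p(B_R)^{-1}=\sup_{0<r<R}\Phi(r)$ with $\Phi(r)=\tfrac{r^n}{n}\bigl(\int_r^R t^{(1-n)/(p-1)}\,dt\bigr)^{p-1}$. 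A one-variable computation ($\Phi'(r)=0$, using that $\Phi\to 0$ at both endpoints $r=0$ and $r=R$ to identify the interior critical point as the maximum) shows the maximum is attained where $(r/R)^{(p-n)/(p-1)}=n/p$; substituting back and simplifying the exponents gives $m_p(B_R)=p^{(p^2-p)/(p-n)}\,n^{-(np-p)/(p-n)}(p-1)^{1-p}R^{-p}$, which is $C(n,p,R)$. In the excluded case $n=p$ the inner integral is $\int_r^R t^{-1}\,dt=\ln(R/r)$; then $\Phi(r)=\tfrac{r^n}{n}(\ln(R/r))^{n-1}$ is maximized at $r=Re^{-(n-1)/n}$, yielding $m_p(B_R)=n^n e^{n-1}(n-1)^{1-n}R^{-n}$, again $C(n,p,R)$.

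The argument is essentially routine once Theorems \ref{theorem1}, \ref{theorem2} and (\ref{4.1}) are in hand; the only step needing genuine care is the optimization defining $m_p(B_R)$ in the Euclidean case — in particular checking that the critical point $(r/R)^{(p-n)/(p-1)}=n/p$ really lies in $(0,1)$ for both $p<n$ and $p>n$ (the exponent $(p-n)/(p-1)$ and the quantity $n/p-1$ change sign together, so it always does), and the bookkeeping of exponents that collapses the optimal value into the compact closed form $C(n,p,R)$, together with the separate logarithmic treatment of $n=p$.
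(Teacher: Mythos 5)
Your proposal is correct and follows exactly the route the paper takes: the paper states Theorem \ref{theorem3} as an immediate consequence of chaining Theorem \ref{theorem1}, Theorem \ref{theorem2} and Grigor'yan's bound (\ref{4.1}), with the Euclidean case using $h(B(R))=n/R$. You additionally carry out (correctly) the one-variable optimization giving $m_p(B_R)=C(n,p,R)$ for $f(t)=t$, including the logarithmic case $n=p$, which the paper states without derivation.
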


Here we would like to use an example given in \cite{PhDJingMao} to
show that our Theorem \ref{theorem3} is useful.

\begin{example} \label{example}
\rm{In general, it is difficult to get the Cheeger constant of a
geodesic ball on a curved manifold. So, for a Riemannian manifold
with \emph{a radial Ricci curvature lower bound w.r.t. some point},
(\ref{4.2}) may not give us any interesting information on the lower
bound for the first eigenvalue of the $p$-Laplacian, while it can
give us an upper bound numerically by using Mathematica.

Denote by $E^3$ the $3$-dimensional Euclidean space with a Cartesian
coordinate system $\left\{x,y,z\right\}$ with the origin $o$. Now,
consider a circle $\mathcal {C}$ in the $xoy$-plane given by
$(x-1)^2+y^2=1/4$, and then rotating it w.r.t. the $y$-axis results
in a ring torus $\mathcal {T}$ with the major radius 1 and the minor
radius 0.5. Of course, we can parameterize the torus $\mathcal{T}$
in $E^3$ by
\begin{eqnarray*}
\left\{
\begin{array}{lll}
x=(1+0.5\cos v)\cos u,\\
y=0.5\sin v, \\
z=(1+0.5\cos v)\sin u, &\quad
\end{array}
\right.
\end{eqnarray*}
with $u,v\in[0,2\pi)$. So, the Gaussian curvature of $\mathcal{T}$
is given by
 \begin{eqnarray*}
 K=\frac{4\cos v}{2+\cos v}, \qquad v\in[0,2\pi).
 \end{eqnarray*}

Now, we want to use our estimates (\ref{4.2}) to give an upper bound
for the first eigenvalue of the $p$-Laplacian on a geodesic ball
$B(q,\delta)$ with radius $\delta$ and center $q\in\mathcal{T}$.
Here we choose $0<\delta<\pi/2$, otherwise the geodesic ball will
overlap. According to the position of the point $q$, we divide into
three cases to derive the upper bound here.

 Case (I): If $q$ is one of those points which are farthest from
the $y$-axis, that is, $q$ locates on the circle $C_{1}$ in
$xoz$-plane defined by $x^2+z^2=9/4$. Without loss of generality, we
can choose $q$ to be the point $(3/2,0,0)$, which implies that $q$
is also on the circle $\mathcal{C}$.

In this case, the parameter $v$ satisfies $v=0$ at $q$. Define a
function $k(v):=4\cos v/(2+\cos v)$, which is decreasing on the
interval $[0,\pi]$ and increasing on the interval $(\pi,2\pi)$.
Clearly, $k(v)$ attains its minimum $k_{min}=-4$ at $v=\pi$. At the
point $(1/2,0,0)$ of the circle $\mathcal{C}$, the parameter $v$
attains value $\pi$. We know that the two arcs of $\mathcal{C}$
starting from $q$ are two geodesics of $\mathcal{T}$, and if we move
away from $q$ on $\mathcal{T}$ with a distance $t$ ($0<t<\pi/2$),
the angle parameter $v$ increases or decreases most quickly, with a
quantity $2t$, along these two arcs. Therefore, for the function
$k(v)$ defined above, together with its monotonicity on the interval
$[0,2\pi)$, we have the Gaussian curvature $K$ satisfies
\begin{eqnarray} \label{4.4}
K\geq\frac{4\cos 2t}{2+\cos 2t},
\end{eqnarray}
where $t=d(q,\cdot)$ is the distance to $q$ on $\mathcal{T}$. This
implies that the best sectional curvature lower bound
$K_{lower}^{1}(t)$ can be chosen to be $K_{lower}^{1}(t)=4\cos
2t/(2+\cos 2t)$.

Case (II): If $q$ is one of those points which are nearest to the
$y$-axis, that is, $q$ locates on the circle $C_{2}$ in $xoz$-plane
defined by $x^2+z^2=1/4$. Without loss of generality, we can choose
$q$ to be the point $(1/2,0,0)$, which implies $q\in\mathcal{C}$.

In this case, by using a similar method as in Case (I), the Gaussian
curvature $K$ satisfies
\begin{eqnarray} \label{4.5}
K\geq-4,
\end{eqnarray}
which implies that the best sectional curvature lower bound
$K_{lower}^{2}(t)$ can be chosen to be $K_{lower}^{2}(t)=-4$.

Case (III): If $q$ is neither a point on the circle $C_{1}$ nor a
point on the circle $C_{2}$. Without loss of generality, we can
choose $q$ to be a point, which is different from the points
$(3/2,0,0)$ and $(1/2,0,0)$, on the circle $\mathcal{C}$.

Assume $v=\alpha$ at $q$ with $0<\alpha<\pi$ or $\pi<\alpha<2\pi$.
By the symmetry of $\mathcal{T}$ w.r.t. the $xoy$-plane, without
loss of generality, we can assume $0<\alpha<\pi$. In this case, by
using a similar method as in Case (I), the Gaussian curvature $K$
satisfies
\begin{eqnarray} \label{4.6}
K\geq\left\{
 \begin{array}{lll}
\frac{4\cos(\alpha+2t)}{2+\cos(\alpha+2t)}, \qquad
0\leq{t}\leq\frac{\pi-\alpha}{2},\\
\\
-4, \qquad\qquad\quad \frac{\pi-\alpha}{2}<t<\frac{\pi}{2},
 \end{array}
 \right.
\end{eqnarray}
which implies the sectional curvature lower bound $K_{lower}^{3}(t)$
can be chosen to be
\begin{eqnarray*} K_{lower}^{3}(t)=\left\{
 \begin{array}{lll}
\frac{4\cos(\alpha+2t)}{2+\cos(\alpha+2t)}, \qquad
0\leq{t}\leq\frac{\pi-\alpha}{2},\\
\\
-4, \qquad\qquad\quad \frac{\pi-\alpha}{2}<t<\frac{\pi}{2},
 \end{array}
 \right.
\end{eqnarray*}

 Correspondingly,
by using Mathematica to solve the initial value problem
\begin{eqnarray*}
\left\{
\begin{array}{lll}
-\frac{f''_{i}(t)}{f_{i}(t)}=K_{lower}^{i}(t), \qquad 0\leq{t}<\frac{\pi}{2},\\
f_{i}(0)=0,\\
f'_{i}(0)=1,  \qquad\qquad\quad~ i=1,2,3, & \quad
\end{array}
\right.
\end{eqnarray*}
with, without loss of generality, choosing $\alpha=\pi/2$ for
$K_{lower}^{3}(t)$, we can get $f_{i}(t)$ numerically for the above
three cases, and then the upper bounds for the first eigenvalue
follow easily (see Table 1 below). Actually, one could get the
graphs of $f_{1}(t)$, $f_{2}(t)$, and $f_{3}(t)$ as Figure 1 below.

\begin{figure*}[h!]
\centering
\includegraphics[height=6cm,width=8cm]{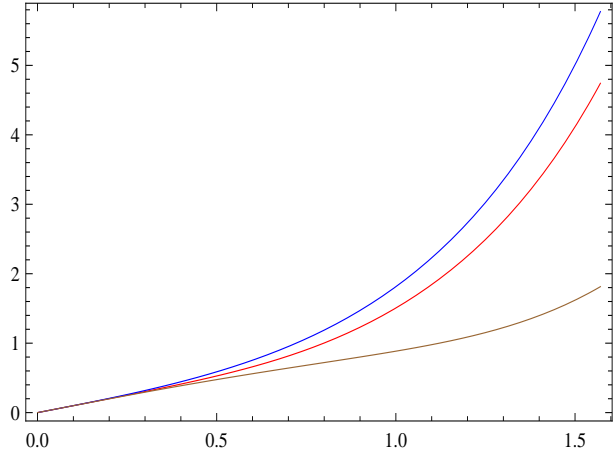}
\caption{Graphs of $f_{i}(t)$; the lowest one (brown) is $f_{1}(t)$
while the highest one (blue) is $f_{2}(t)$, and the middle one (red)
is $f_{3}(t)$.}
\end{figure*}

Correspondingly, the model surfaces for the geodesic ball
$B(q,r_{0})$ in the above three cases can be chosen to be
$M^{-}_{i}:=[0,r_{0})\times_{f_{i}(t)}\mathbb{S}^{n-1}$ ($i=1,2,3$).
Since $K^{2}_{lower}(t)\leq K^{3}_{lower}(t)\leq K^{1}_{lower}(t)$
for $0\leq t<r_{0}$, then by the Sturm-Picone comparison theorem, we
know that $f_{2}(t)\leq f_{3}(t)\leq f_{1}(t)$ for $0\leq t<r_{0}$
(see also Figure 1). As we have pointed out in Section
\ref{section2}, if the Gaussian curvature is nonnegative around
$q\in\mathcal{T}$, then the model surface could be locally embedded
into a surface of revolution in $\mathbb{R}^3$. So, here we could
only get a picture for $M_{1}^{-}$ by using Mathematica. One can see
Figure 2 in \cite{fmi} (equivalently, Figure 2.3 in
\cite{PhDJingMao}) for the graph of $M_{1}^{-}$. When $f'(t)$ starts
to be greater than $1$ for some $t=t_{0}$, the model surface stops
being isometrically embeddedable in $\mathbb{R}^3$, which implies
that its picture can not be drawn when $t\geq t_{0}$. We call this
$t_{0}$ ``\emph{stopping time}". The ``\emph{stopping time}" $t_{0}$
for our model surface $M_{1}^{-}$ here is $t_{0}\approx1.097$ (cf.
example 6.1 in \cite{fmi} or example 2.5.1 in \cite{PhDJingMao}).
For more information about the properties of the model manifolds of
prescribed manifolds, one could see \cite{fmi,PhDJingMao} in detail.

%\begin{figure*}[h!]
%\centering
%\includegraphics[height=6cm,width=8cm]{1M-.jpg}
%\caption{Graph of the model surface $M^{-}_{1}$.}
%\end{figure*}

%\begin{figure*}[h!]
%\centering
%\includegraphics[height=6cm,width=8cm]{2M-.jpg}
%\caption{Graph of the model surface $M^{-}_{2}$.}
%\end{figure*}

%\begin{figure*}[h!]
%\centering
%\includegraphics[height=6cm,width=8cm]{3M-.jpg}
%\caption{Graph of the model surface $M^{-}_{3}$ with choosing
%$\alpha=\frac{\pi}{4}$.}
%\end{figure*}

Without loss of generality, we can choose $\alpha=\pi/2$ in  Case
(III). Denote the upper bounds of the first Dirichlet eigenvalue of
the $p$-Laplacian in the above three cases by JM1, JM2 and JM3,
respectively. Then, for different $p$ and $\delta$, we have the
Table 1 below.

Table 1 makes sense, since it is difficult to compute the first
Dirichlet eigenvalue of the $p$-Laplacian on a geodesic ball of
$\mathcal{T}$, but, this table supplies us a range for the first
eigenvalue.

For Case (I) and Case (III), the lower bounds of the Gaussian
curvature w.r.t. the base point $q\in\mathcal{T}$ are given by
continuous functions of the distance parameter $t$, which are not
constant functions. By (1) of Remark \ref{remark1}, we know that if
we apply Theorem \ref{theorem1}, then the corresponding estimates
for the first eigenvalue of the $p$-Laplacian will be sharper than
the estimates obtained by using theorem 1.1 in \cite{am} or theorem
3 in \cite{h}. Of course, one may also use other examples about
elliptic paraboloid and saddle shown in \cite{fmi} to show the
advantage of our Theorem \ref{theorem1}, but, this example about
torus is enough.

In addition, for given $n$, $p$ and $R$, estimates (\ref{4.3}) give
an interval where the first Dirichlet eigenvalue of the
$p$-Laplacian on the ball $B(R)\subseteq\mathbb{R}^{n}$ locates.
Although, in \cite{rge}, the authors there have shown that one can
get the approximate value of the first eigenvalue of the
$p$-Laplacian of the ball $B(R)$ in the Euclidean space via the
inverse power method, we still think (\ref{4.3}) is useful, since it
can be used to check the validity of this approximate value of the
first eigenvalue at the first glance.

\vskip 1 mm
 {\bf Table 1} \quad Numerical values of the upper bounds of the first Dirichlet
 eigenvalue of the $p$-Laplacian
 \vskip 2mm
\begin{tabular}{|c |c| c c c c c c| }
\hline
&  & $\delta=\frac{\pi}{24}$ & $\delta=\frac{\pi}{12}$ & $\delta=\frac{\pi}{6}$ & $\delta=\frac{\pi}{4}$& $\delta=\frac{\pi}{3}$& $\delta=\frac{5\pi}{12}$ \\
\hline
      & $p=1.1$  &27.1285   & 12.5875  & 5.76216 & 3.615235 & 2.63716 & 2.18278  \\
      & $p=1.5$  & 129.804 & 45.6551 & 15.8426  & 8.43068 & 5.41996 & 3.98597  \\
JM1   & $p=2$    & 633.49 & 157.585 & 38.6834  &16.7921 & 9.29658 & 6.02468  \\
      & $p=2.5$ & 2643.65 &465.081 & 80.7606  & 28.6185 & 13.6868 & 7.87571  \\
      & $p=2.9$ & 7788.71 & 1038.53 & 136.711  & 41.1932 & 17.5401 & 9.19918  \\
\hline
      & $p=1.1$  & 27.3318   & 12.9637  & 6.43987 & 4.52941 & 3.69959 & 3.27638   \\
      & $p=1.5$  & 130.731 & 46.9574 & 17.6385  & 10.5314 & 7.67446 & 6.24665\\
JM2   & $p=2$    & 637.815 & 161.89 & 42.9072  & 20.8735 & 13.1648 & 9.60296  \\
      & $p=2.5$ & 2661.1 & 477.379 & 89.3207  & 35.4141 & 19.3209 & 12.609  \\
      & $p=2.9$ & 7839.06 & 1065.42 & 150.92  & 50.8147 & 24.6856 & 14.7308  \\
\hline
      & $p=1.1$  & 27.1916   & 12.7303  &6.13046 & 4.27308 & 3.53423 & 3.17492   \\
      & $p=1.5$  & 130.086 & 46.1295 & 16.7496 & 9.8136 & 7.1877 & 5.93221  \\
JM3   & $p=2$    & 634.785 & 159.108 & 40.7077  & 19.3037 & 12.1299 & 8.9332 \\
      & $p=2.5$ & 2648.82 & 469.358 & 84.735  & 32.6294 & 17.6292 & 11.5564  \\
      & $p=2.9$ & 7803.57 & 1047.8 & 143.195  & 46.7425 & 22.4114& 13.3865  \\
\hline
\end{tabular}

\vskip 2 mm
 }
\end{example}

\section{Some facts about the heat eqaution}
\renewcommand{\thesection}{\arabic{section}}
\renewcommand{\theequation}{\thesection.\arabic{equation}}
\setcounter{equation}{0} \setcounter{maintheorem}{0}

If we want to get the existence, or even give an explicit
expression, of the solution for the heat equation
(\ref{heatequation}) with a prescribed initial condition or
(Dirichlet or Neumann) boundary condition, we need to use a tool
named \emph{heat kernel}.

\begin{defn} \label{def1chapter1} \label{def4}
A fundamental solution, which is called the heat kernel, of the heat
equation on a prescribed Riemannian manifold $M$ is a continuous
function $H(x,y,t)$, defined on $M\times{M}\times(0,\infty)$, which
is $C^{2}$ with respect to $x$, $C^{1}$ with respect to $t$, and
which satisfies
\begin{eqnarray*}
L_{x}p=0, \qquad \lim\limits_{t\rightarrow0}H(x,y,t)=\delta_{y}(x),
\end{eqnarray*}
where $\delta_{y}(x)$ is the Dirac delta function, that is, for all
bounded continuous function $f$ on $M$, we have, for every
$y\in{M}$,
 \begin{eqnarray*}
 \lim\limits_{t\rightarrow0}\int_{M}H(x,y,t)f(x)dV(x)=f(y).
 \end{eqnarray*}
\end{defn}

By constructing a parametrix, the existence of the heat kernel on
compact or complete Riemannian manifolds, or even manifolds with
boundaries subject to either Dirichelt or Neumann boundary
conditions can be obtained (see, for instance, \cite{i}). In fact,
for a complete Riemannian manifold, one can have the following.

\begin{theorem} (\cite{rs}) \label{theoremheatchapter1}
Let $M$ be a complete Riemannian manifold, then there exists a heat
kernel $H(x,y,t)\in{C^{\infty}(M\times{M}\times\mathbb{R}^{+})}$
such that $\\$ (I) $H(x,y,t)=H(y,x,t)$,
 $\\$ (II) $\lim\limits_{t\rightarrow0}H(x,y,t)=\delta_{x}(y)$,
 $\\$ (III) $\left(\Delta-\frac{d}{dt}\right)H=0$,
 $\\$ (IV) $H(x,y,t)=\int_{M}H(x,z,t-s)H(z,y,s)dV(z)$.
\end{theorem}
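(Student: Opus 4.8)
The plan is to obtain $H$ as the monotone limit of \emph{Dirichlet} heat kernels along an exhaustion of $M$, and then to read off properties (I)--(IV) from the corresponding properties at finite stage together with standard interior parabolic estimates; this is the classical construction (see \cite{rs}), and I would only sketch it.

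First I would fix an exhaustion $D_1\Subset D_2\Subset\cdots$ of $M$ by relatively compact open sets with smooth boundary, $\bigcup_k D_k=M$, which exists on any manifold. On each $D_k$ the existence of a Dirichlet heat kernel $H_k(x,y,t)\in C^{\infty}\bigl(\overline{D_k}\times\overline{D_k}\times(0,\infty)\bigr)$ is classical and may be taken from the parametrix construction referenced before the statement (e.g.\ \cite{i}): $H_k$ is symmetric in $x,y$, solves $(\Delta_x-\partial_t)H_k=0$, vanishes for $x\in\partial D_k$, and realizes the Dirac mass as $t\to0^+$. The parabolic maximum principle gives $H_k>0$ on $D_k\times D_k\times(0,\infty)$; applying the maximum principle to $H_{k+1}-H_k$ on $D_k$ (where $H_{k+1}\ge 0=H_k$ on the parabolic boundary) yields monotonicity $H_k\le H_{k+1}$; and each $H_k$ is sub-Markovian, $\int_{D_k}H_k(x,y,t)\,dV(y)\le 1$.

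Next I would pass to the limit. On a fixed compact $K\subset M$ and time interval $[a,b]\subset(0,\infty)$, the sub-Markov bound gives a uniform $L^1_{\mathrm{loc}}$ bound, and the local parabolic mean-value (sub-solution) inequality upgrades this to a uniform pointwise bound on $K\times K\times[a,b]$ for all large $k$. Interior Schauder estimates for the heat operator then control all derivatives locally uniformly, so by Arzel\`a--Ascoli together with monotonicity (which forces the whole sequence to converge) $H_k\to H$ in $C^{\infty}_{\mathrm{loc}}\bigl(M\times M\times(0,\infty)\bigr)$. Hence $H$ is smooth, symmetric, nonnegative, and solves $(\Delta_x-\partial_t)H=0$, giving (I) and (III); a routine comparison argument shows $H$ is independent of the exhaustion. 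For (II) I would verify $\lim_{t\to0^+}\int_M H(x,y,t)f(x)\,dV(x)=f(y)$ for bounded continuous $f$: in a coordinate ball about $y$, split the integral into a small neighbourhood of $y$, where one compares $H$ with the Euclidean heat kernel via the Gaussian leading term of the parametrix and uses continuity of $f$ at $y$, and its complement, where an off-diagonal upper bound (or the sub-Markov property with a crude exterior estimate) makes the contribution vanish. Finally (IV) is the Chapman--Kolmogorov identity, which holds for each $H_k$ by the semigroup property of the Dirichlet heat semigroup on $D_k$; letting $k\to\infty$ and applying the monotone convergence theorem to the integral on the right yields it for $H$.

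The main obstacle is the passage to the limit: one must exclude that the increasing sequence $H_k$ blows up, and the clean mechanism is the uniform sub-Markov bound $\int H_k\,dV\le 1$ fed into the local parabolic mean-value inequality, after which convergence and the smoothness of $H$ follow from standard interior estimates. The secondary delicate point is (II) in the non-compact setting: making the contribution away from the diagonal negligible \emph{without} assuming stochastic completeness, which is handled by local comparison with the Euclidean kernel rather than by any global heat-kernel estimate.
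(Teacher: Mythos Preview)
The paper does not give its own proof of this theorem: it is quoted verbatim from \cite{rs} (Schoen--Yau) and stated without argument, serving only as a tool in the proof of Theorem~\ref{7theoremchapter2}. Your sketch is the standard exhaustion-by-Dirichlet-kernels construction that appears in \cite{rs} (and, in a closely related form, in \cite{i}), so there is nothing to compare against in the present paper; your approach is correct and is essentially the one the cited reference uses.
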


In the next section, we would like to focus on the heat kernels of
geodesic balls on complete manifolds, and successfully obtain a
comparison result, which can be seen as an extension of
Debiard-Gaveau-Mazet's comparison result in \cite{dge} and
Cheeger-Yau's comparison result in \cite{cy}, for the heat kernel
with a Dirichlet or Neumann boundary condition -- see Theorem
\ref{7theoremchapter2} for the precise statement. There is a
connection between the heat kernel and the eigenvalues of the
Laplace operator. One can get a glance about this relation from the
following conclusion (cf. \cite{i}, p. 169).

\begin{theorem} (The Sturm-Liouville decomposition for the Dirichlet eigenvalue problem) \label{theoremheat2chapter1}
Given a normal domain $\Omega$ in a Riemannian manifold $M$, there
exists a complete orthonormal basis
$\{\phi_{1},\phi_{2},\phi_{3},\cdots\}$ of $L^{2}(\Omega)$
consisting of Dirichlet eigenfunctions of the Laplacian $\Delta$,
with $\phi_{j}$ having eigenvalue $\lambda_{j}$ satisfying
\begin{eqnarray*}
0<\lambda_{1}<\lambda_{2}\leq\lambda_{3}\leq\cdots\uparrow\infty.
\end{eqnarray*}
In particular, each eigenvalue has finite multiplicity, and
\begin{eqnarray*}
\phi_{j}\in{C^{\infty}(\Omega)\cap\bar{C}^{1}(\Omega)},
\end{eqnarray*}
where $\bar{C}^{1}(\Omega)$ is the set of functions $v$ satisfying
that $v$ is $C^{1}$ on $\Omega$, and can be extended to a continuous
function on $\overline{\Omega}$, and moreover, the gradient
${\rm{grad}}{v}$ can be extended to a continuous vector field on
$\overline{\Omega}$.

Finally, the heat kernel $H(x,y,t)$ on $\Omega$ satisfies
\begin{eqnarray*}
H(x,y,t)=\sum\limits_{j=1}^{\infty}e^{-\lambda_{j}t}\phi_{j}(x)\phi_{j}(y),
\end{eqnarray*}
with convergence absolute, and uniform, for each $t>0$. In
particular,
\begin{eqnarray*}
\int_{\Omega}H(x,x,t)dV(x)=\sum\limits_{j=1}^{\infty}e^{-\lambda_{j}t}.
\end{eqnarray*}
\end{theorem}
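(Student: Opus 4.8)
The statement is classical, and the route I would take is to reduce it to the spectral theorem for compact self-adjoint operators via the Green operator, and then build the heat kernel from the resulting eigenbasis. The plan is as follows. Since $\Omega$ is a normal domain — in particular precompact with boundary regular enough that the Dirichlet problem is solvable and the Rellich--Kondrachov embedding applies — for every $f\in L^2(\Omega)$ the problem $\Delta u=-f$ in $\Omega$, $u=0$ on $\partial\Omega$, has a unique weak solution $u=Gf\in W^{1,2}_0(\Omega)$. This defines the Green operator $G\colon L^2(\Omega)\to L^2(\Omega)$; from the weak formulation one reads off that $G$ is bounded, symmetric, nonnegative and injective, and, most importantly, that it is \emph{compact}, because it factors through the compact inclusion $W^{1,2}_0(\Omega)\hookrightarrow L^2(\Omega)$ on a precompact domain.

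Applying the spectral theorem to $G$ produces a complete orthonormal basis $\{\phi_j\}_{j\ge1}$ of $L^2(\Omega)$ and reals $\mu_1\ge\mu_2\ge\cdots>0$ with $\mu_j\to0$, each of finite multiplicity, such that $G\phi_j=\mu_j\phi_j$. Setting $\lambda_j:=1/\mu_j$ gives $\Delta\phi_j=-\lambda_j\phi_j$ weakly, $0<\lambda_1\le\lambda_2\le\cdots\uparrow\infty$, with finite multiplicities; strict positivity of $\lambda_1$ is the Poincar\'e inequality on $\Omega$. To obtain the strict inequality $\lambda_1<\lambda_2$ I would argue that the first eigenfunction can be taken strictly positive: replacing $\phi_1$ by $|\phi_1|$ does not raise the Rayleigh quotient, so $|\phi_1|$ is again a minimizer, hence a nonnegative eigenfunction, and the strong maximum principle (or a Harnack inequality) makes it positive in $\Omega$; a two-dimensional first eigenspace would then contain a sign-changing function orthogonal to a positive one, which is impossible. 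Interior smoothness $\phi_j\in C^\infty(\Omega)$ is elliptic regularity for $\Delta$, and $\phi_j\in\bar C^1(\Omega)$ follows from Schauder estimates up to the regular boundary of a normal domain.

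For the kernel identity I would set $K(x,y,t):=\sum_{j\ge1}e^{-\lambda_j t}\phi_j(x)\phi_j(y)$ and first establish absolute convergence, uniform on $\Omega\times\Omega$ for each fixed $t>0$. The ingredient here is a sup-norm bound $\|\phi_j\|_{L^\infty(\Omega)}\le C\,\lambda_j^{\,n/4}$, obtained by bootstrapping the elliptic estimate for $\Delta\phi_j=-\lambda_j\phi_j$ through the Sobolev embedding finitely many times with $\|\phi_j\|_{L^2}=1$, together with the Weyl-type lower bound $\lambda_j\ge c\,j^{2/n}$; then $\sum_j e^{-\lambda_j t}\|\phi_j\|_\infty^2\le C\sum_j e^{-cj^{2/n}t}j^{n/2}<\infty$, which yields the stated convergence and the continuity of $K(\cdot,\cdot,t)$. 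Differentiating term by term, justified by the same bounds applied to $\lambda_j^k e^{-\lambda_j t}$, shows $(\Delta_x-\partial_t)K=0$ and $K(\cdot,y,t)|_{\partial\Omega}=0$; and for $f\in L^2(\Omega)$ one has $\int_\Omega K(x,y,t)f(y)\,dV(y)=\sum_j e^{-\lambda_j t}\langle f,\phi_j\rangle\phi_j(x)\to f(x)$ in $L^2$ as $t\to0^+$ by completeness of $\{\phi_j\}$, so $K$ carries the correct Dirac initial data. By uniqueness of the Dirichlet heat kernel — a parabolic maximum principle argument — $K\equiv H$. Finally, setting $x=y$, integrating over $\Omega$, interchanging sum and integral by monotone convergence, and using $\int_\Omega\phi_j^2\,dV=1$ gives $\int_\Omega H(x,x,t)\,dV(x)=\sum_j e^{-\lambda_j t}$.

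The step I expect to be the main obstacle is the compactness of the Green operator, which is precisely where the hypothesis that $\Omega$ is a normal domain is used: one needs enough boundary regularity both for the compact Sobolev embedding $W^{1,2}_0(\Omega)\hookrightarrow L^2(\Omega)$ and for the up-to-the-boundary estimates giving $\phi_j\in\bar C^1(\Omega)$. After that, the eigenfunction sup-norm bounds feeding the uniform convergence of the series for $K$ are the only other substantive point; everything else is a formal manipulation of the spectral resolution.
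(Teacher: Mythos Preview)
The paper does not supply its own proof of this theorem: it is quoted verbatim as a classical fact from Chavel's book (cf.\ \cite{i}, p.~169), and is used only as a tool in the proof of Theorem~\ref{theorem5}. So there is nothing in the paper to compare your proposal against.

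That said, your outline is the standard route to this result and is essentially the argument one finds in Chavel. One small inaccuracy: you write that the compact embedding $W^{1,2}_0(\Omega)\hookrightarrow L^2(\Omega)$ requires boundary regularity coming from the ``normal domain'' hypothesis, but in fact for the \emph{zero-trace} space this embedding holds for any bounded open $\Omega$ (extend by zero and apply Rellich--Kondrachov on a large ball). The place where the normal-domain assumption is genuinely needed is only the second point you mention, namely the Schauder-type estimates up to the boundary that give $\phi_j\in\bar C^1(\Omega)$. This does not affect the correctness of your argument, only the attribution of which step uses which hypothesis.
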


By using Theorem \ref{theoremheat2chapter1} and the comparison
result for the heat kernel, Theorem \ref{7theoremchapter2}, we can
supply another ways to prove the most part of theorems~3.3 and~4.4
in \cite{fmi} -- see Theorem \ref{theorem5} for the precise
statement.

\section{Estimates for the heat kernel}
\renewcommand{\thesection}{\arabic{section}}
\renewcommand{\theequation}{\thesection.\arabic{equation}}
\setcounter{equation}{0} \setcounter{maintheorem}{0}

As before, for a complete $n$-dimensional Riemannian manifold $M$,
denote by $B(p,r_{0})$ the open geodesic ball with center $p$ and
radius $r_0$ of $M$. Let $V_{n}(p^{-} ,r_{0})$ be the geodesic ball
with center $p^{-}$ and radius $r_{0}$ of an $n$-dimensional
spherically symmetric manifold $M^-=[0, l)\times_f\mathbb{S}^{n-1}$
with respect to $p^{-}$, and let $V_{n}(p^{+} ,r_{0})$ be the
geodesic ball with center $p^{+}$ and radius $r_{0}$ of an
$n$-dimensional spherically symmetric manifold $M^+=[0,
l)\times_f\mathbb{S}^{n-1}$ with respect to $p^{+}$, where the model
spaces $M^{+}$ and $M^{-}$ can be determined by the upper and lower
bounds of the radial sectional and Ricci curvatures w.r.t. the given
point $p\in{M}$. This fact has been shown in the previous sections.
Denote by $H(p,y,t)$ the heat kernel on $M$, and by
$H_{+}(p^{+},q,t)$ and $H_{-}(p^{-},q,t)$ the heat kernels on
$M^{+}$ and $M^{-}$, respectively. In this section, we would like to
give an upper and lower bound for the heat kernel. However, before
that, we need to use the following facts in \cite{cy}.

First, we need the following concept, which is used to describe
model spaces considered in \cite{cy}.
\begin{defn} \label{defheatchapter2}
An $n$-dimensional manifold $\mathscr{M}^{n}$ is an open model, if
the following conditions hold:

 (I) For some $x\in\mathscr{M}^{n}$ and $0<R\leq\infty$,
 $\mathscr{M}^{n}=B(x,R)$ (the open ball of radius $R$ about $x$) and
 $\exp_{x}|B_{0}(R)$, with $B_{0}(R)\subseteq{T_{x}}\mathscr{M}^{n}$, is a diffeomorphism.

 (II) For all $r<R$, the mean curvature of the distance sphere
 $S(x,r)$ is constant on $S(x,r)$.
$\\$ Moreover, a model $\mathscr{M}^{n}$ is an open Ricci model if
its metric, when written in polar coordinates, is of the form
\begin{eqnarray*}
dr^{2}+f^{2}(r)h,
\end{eqnarray*}
where $h$ is the standard metric on $\mathbb{S}^{n-1}$. A compact
Riemannian manifold $\mathscr{M}^{n}$ is a closed model (resp.,
closed Ricci model) if, for some $x$,
$\mathscr{M}=\overline{B(x,R)}$ and $B(x,R)$ is an open model
(resp., Ricci model).
\end{defn}

Clearly, by Definition \ref{def1}, we know that a spherically
symmetric manifold must be an open or closed Ricci model with
respect to its base point.

We also need the following lemma which shows us the positivity of
the heat kernel.
\begin{lemma} (\cite{cy}) \label{lemmaheat2chapter2}
Let $\Omega$ be a domain in a Riemannian manifold. Then for either
Dirichlet or Neumann boundary conditions, the heat kernel $H(x,y,t)$
on $\Omega$ satisfies $H(x,y,t)>0$ for $t>0$.
\end{lemma}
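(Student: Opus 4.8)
The plan is to obtain positivity of $H$ from the parabolic maximum principle in two stages --- first nonnegativity, then strict positivity --- treating the interior behaviour uniformly in the Dirichlet and Neumann cases, with only the boundary step distinguishing them. Fix $y\in\Omega$ and write $u(x,t):=H(x,y,t)$. By the construction of the heat kernel recalled before Definition \ref{def1chapter1} (and by Theorem \ref{theoremheat2chapter1} when $\Omega$ is a normal domain), $u$ is smooth in the interior, solves $\Delta_x u-\partial_t u=0$ on $\Omega\times(0,\infty)$, obeys the prescribed homogeneous boundary condition ($u=0$ on $\partial\Omega$ for Dirichlet, $\partial_\nu u=0$ for Neumann), and satisfies $u(\cdot,t)\to\delta_y$ as $t\to0^+$ in the distributional sense of Definition \ref{def1chapter1}.

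For nonnegativity, let $f\in C_0^\infty(\Omega)$ with $f\geq0$ and put $w(x,t):=\int_\Omega H(x,z,t)f(z)\,dV(z)$; this is the solution of the heat equation on $\Omega$ with initial datum $f$ and the given boundary condition. The weak parabolic maximum principle on $\Omega\times(0,T]$ --- in the Dirichlet case comparing $w$ with its data on the parabolic boundary, in the Neumann case invoking in addition the Hopf boundary point lemma to exclude a negative interior minimum --- yields $w\geq0$ throughout. Letting $f$ run through an approximate identity concentrating at a point $z\in\Omega$ and using continuity of $H$ gives $H(x,z,t)\geq0$ for all $x,z\in\Omega$ and $t>0$. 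When $\Omega$ is not precompact or has irregular boundary, I would first carry this out on an exhaustion of $\Omega$ by smooth precompact subdomains $\Omega_k\uparrow\Omega$, where $H_{\Omega_k}\geq0$ is standard, and then pass to the limit using $H_{\Omega_k}\uparrow H_\Omega$.

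For strict positivity, suppose to the contrary that $u(x_0,t_0)=0$ for some $x_0\in\Omega$ and some $t_0>0$. Since $u\geq0$ is a smooth solution of the heat equation on the connected open manifold $\Omega$ and attains its minimum value $0$ at an interior point, the strong maximum principle for parabolic operators forces $u\equiv0$ on $\Omega\times(0,t_0]$. But then $\int_\Omega H(x,y,t)f(x)\,dV(x)=0$ for every $f$ and every $t\in(0,t_0]$, contradicting $\lim_{t\to0^+}\int_\Omega H(x,y,t)f(x)\,dV(x)=f(y)$ for any $f$ with $f(y)\neq0$. Hence $H(x,y,t)>0$ for all $x,y\in\Omega$ and $t>0$.

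The main obstacle is making the maximum principle rigorous at this level of generality: $\Omega$ is only ``a domain'', hence possibly unbounded and with non-smooth boundary, which is why the exhaustion argument --- reducing to precompact smooth subdomains and using monotone convergence of the corresponding heat kernels --- is the essential technical point, and why in the Neumann case one must check that the boundary condition is compatible with the Hopf lemma (for instance via a local reflection across $\partial\Omega$). Once these reductions are in place, the interior strong maximum principle step is routine.
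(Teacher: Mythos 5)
The paper offers no proof of this lemma: it is quoted verbatim from Cheeger--Yau \cite{cy} and used as a black box, so there is no argument of the author's to compare yours against. Judged on its own, your proof is the standard one and is essentially correct: nonnegativity by running the parabolic maximum principle on $w(x,t)=\int_\Omega H(x,z,t)f(z)\,dV(z)$ for $f\geq 0$ and letting $f$ concentrate, then strict positivity by observing that a nonnegative solution of the heat equation vanishing at an interior point $(x_0,t_0)$ must, by the strong maximum principle, vanish identically on $\Omega\times(0,t_0]$, which contradicts $\lim_{t\to 0^+}\int_\Omega H(x,y,t)f(x)\,dV(x)=f(y)$. This is exactly the mechanism behind the cited result.

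One detail in your reduction is wrong as stated: the monotone convergence $H_{\Omega_k}\uparrow H_\Omega$ along an exhaustion by smooth precompact subdomains is a Dirichlet phenomenon; Neumann heat kernels are \emph{not} monotone under domain inclusion, so the exhaustion step cannot be ``carried out uniformly'' in both cases. This does not sink the proof, because the Neumann heat kernel is only being considered here on domains with smooth compact boundary (geodesic balls and balls in the model spaces), where your direct maximum-principle-plus-Hopf argument applies without any exhaustion; but you should explicitly confine the exhaustion/limit step to the Dirichlet case and treat Neumann only on precompact smooth domains. With that restriction the argument is complete.
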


By proposition 2.2 and lemma 2.3 of \cite{cy}, we have the following
lemma.
\begin{lemma} (\cite{cy}) \label{lemmaheatchapter2}
(I) Let $\mathscr{M}^{n}$ be an $n$-dimensional open model (with
Dirichlet or Neumann boundary conditions) or a closed model. Then
its heat kernel $H(x,y,t)=H(d(x,y),t)$ depends only on variables
$r:=d(x,y)$ and $t$, with $d$ the distance function on
$\mathscr{M}^{n}$.

(II) Conversely, let $\mathscr{M}^{n}=B(x,R)$ or
$\overline{B(x,R)}$, and assume that $\overline{B(x,R)}$ is
complete. Then if the heat kernel $H(x,y,t)$ depends only on on
variables $r:=d(x,y)$ and $t$, it follows that $\mathscr{M}^{n}$ is
a model.

(III) Let $\mathscr{M}^{n}$ be a model, and let $H(r,t)$ be the
fundamental solution of the heat equation (with respect to Dirichlet
or Neumann boundary conditions if $\mathscr{M}^{n}$ is open). Then,
for all $r,t > 0$, we have
\begin{eqnarray*}
\frac{\partial}{\partial{r}}H(r,t)<0.
\end{eqnarray*}
\end{lemma}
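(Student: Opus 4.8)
The plan is to treat the three parts in turn: (I) rests on uniqueness of the heat kernel together with the symmetry of a model, (III) is a one-dimensional parabolic maximum-principle argument, and (II) is the converse, obtained by reading geometric information out of the heat equation. For part (I), I would use that a model $\mathscr{M}^n=B(x,R)$ carries a large isometry group fixing its center: by Definition \ref{defheatchapter2} the metric is rotationally symmetric about $x$, so $O(n)$ acts isometrically fixing $x$ and transitively on each distance sphere $S(x,\rho)$, $\rho<R$, and every such isometry preserves the boundary $S(x,R)$, hence the Dirichlet or Neumann condition. Since the heat kernel with the prescribed boundary condition is unique, each isometry $\varphi$ fixing $x$ satisfies $H(\varphi(y),\varphi(z),t)=H(y,z,t)$; setting $z=x$ and using symmetry (Theorem \ref{theoremheatchapter1}(I)) gives $H(x,y,t)=H(x,\varphi(y),t)$, and transitivity of $O(n)$ on $S(x,\rho)$ then forces $H(x,\cdot,t)$ to be constant there, i.e.\ $H(x,y,t)=H(d(x,y),t)$. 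The closed-model case is identical with the boundary condition dropped.

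For part (III), I would write $H(x,y,t)=H(r,t)$ with $r=d(x,y)$ and use that on a model the heat equation reduces to $H_t=H_{rr}+\tfrac{(n-1)f'(r)}{f(r)}H_r=\tfrac{1}{f^{n-1}(r)}\bigl(f^{n-1}(r)H_r\bigr)_r$. The decisive step is to pass to the flux $W:=f^{n-1}(r)H_r$, which solves the \emph{zeroth-order-free} parabolic equation $W_t=W_{rr}-\tfrac{(n-1)f'(r)}{f(r)}W_r$ on $(0,R)\times(0,\infty)$, so that a maximum principle becomes available. One then checks the sign of $W$ on the parabolic boundary: $W(0,t)=0$ because $f^{n-1}(r)H_r(r,t)=\int_0^r f^{n-1}(\rho)H_t(\rho,t)\,d\rho\to0$ as $r\to0$ ($H$ is smooth at the pole, so $H_r$ is bounded with $H_r(0,t)=0$); for Neumann data $W(R,t)=0$, while for Dirichlet data $H(R,t)=0$ with $H>0$ inside by Lemma \ref{lemmaheat2chapter2}, so Hopf's boundary lemma gives $H_r(R,t)<0$ and hence $W(R,t)<0$; and $W(r,t)\to0$ as $t\to0^+$ for $0<r<R$ from the small-time behaviour of the heat kernel away from the pole. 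The weak and then strong parabolic maximum principles applied to $W$ then give $W<0$, i.e.\ $\partial_r H=W/f^{n-1}<0$, in the interior. For the open complete model ($R=\infty$) I would run this on an exhaustion by geodesic balls $B(x,R_k)$ with Dirichlet data, use $H_{R_k}\uparrow H$ together with interior parabolic estimates to pass $W_{R_k}\le0$ to the limit, and finish with the strong maximum principle.

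For part (II), assuming $\mathscr{M}^n=B(x,R)$ (resp.\ $\overline{B(x,R)}$) with $\overline{B(x,R)}$ complete and $H(x,y,t)=u(r,t)$, I would first note that on $\mathscr{M}^n\setminus(\{x\}\cup\mathrm{Cut}(x))$ the function $r$ is smooth and $\Delta_y r$ equals the mean curvature of $S(x,r)$; plugging $H=u(r,t)$ into $H_t=\Delta_y H$ gives $u_t=u_{rr}+(\Delta r)u_r$, so $\Delta r=(u_t-u_{rr})/u_r$ wherever $u_r\neq0$. Since $\Delta r$ is $t$-independent while the right side a priori depends only on $r$ and $t$, it follows that $\Delta r=m(r)$ depends on $r$ alone, i.e.\ each distance sphere has constant mean curvature. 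It then remains to upgrade this — using completeness of $\overline{B(x,R)}$ — to $\mathrm{Cut}(x)\cap B(x,R)=\emptyset$ (so that $\exp_x$ is a diffeomorphism) and to deduce from $\partial_r\ln\sqrt{|g|}=m(r)$ and $\sqrt{|g|}\sim r^{n-1}$ at the pole that the metric takes the rotationally symmetric form $dr^2+f^2(r)h$; for this last, purely geometric, step I would follow \cite{cy}.

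The step I expect to be the main obstacle is twofold: in (III), the careful justification of the parabolic-boundary behaviour of $W$ — the Hopf-lemma step at $r=R$ for Dirichlet data, the decay of $W$ at spatial infinity when $R=\infty$, and the applicability of the strong maximum principle up to $t=0$ — and in (II), the passage from ``all distance spheres have constant mean curvature'' (together with completeness) to ``$\exp_x$ is a diffeomorphism and the metric is rotationally symmetric,'' which is the only genuinely geometric rather than PDE-theoretic input and where one must lean most directly on \cite{cy}.
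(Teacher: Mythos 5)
This lemma is imported verbatim from Cheeger--Yau (proposition~2.2 and lemma~2.3 of \cite{cy}); the paper gives no proof of its own, so the only comparison available is with the standard argument in \cite{cy}. Your part (III) is essentially that argument (pass to the flux $W=A(r)H_r$, whose equation has no zeroth-order term, check the sign of $W$ on the parabolic boundary via the Hopf lemma at $r=R$ in the Dirichlet case, and apply the maximum principle), and your reduction in part (II) to ``$\Delta r$ is $t$-independent, hence a function of $r$ alone, hence the spheres have constant mean curvature'' is also the right first step.

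There is, however, a genuine gap in part (I), and it propagates to the end of part (II): you conflate ``model'' with ``Ricci model.'' Definition \ref{defheatchapter2} only requires that $\exp_x$ be a diffeomorphism and that each distance sphere $S(x,r)$ have \emph{constant mean curvature}; it does \emph{not} say the metric is $dr^2+f^2(r)h$ --- that is the extra hypothesis defining a \emph{Ricci} model. A ball in $\mathbb{CP}^m$ about a point is a model but carries no transitive $O(n)$-action on its distance spheres, so your isometry-plus-uniqueness argument for (I) simply does not apply to general models. The correct route (and the one in \cite{cy}) uses no symmetry: because $\Delta r=m(r)$ is constant on spheres, the radial ansatz $u(d(x,y),t)$ turns the heat equation into the one-dimensional problem $u_t=u_{rr}+m(r)u_r$ with delta initial data at $r=0$ (with respect to the measure $A(r)\,dr$, $A(r)$ the area of $S(x,r)$) and the given boundary condition at $r=R$; solving this and invoking uniqueness of the fundamental solution identifies $u(d(x,y),t)$ with $H(x,y,t)$. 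The same substitution of $A(r)$ for $f^{n-1}(r)$ is needed in your part (III) for it to cover non-Ricci models. Finally, the last sentence of your part (II) overreaches: the conclusion of (II) is only that $\mathscr{M}^n$ is a model, and constant mean curvature of the spheres does not imply the warped-product form $dr^2+f^2(r)h$, so that deduction should be deleted rather than deferred to \cite{cy}. (For the applications in this paper the model spaces are spherically symmetric, i.e.\ Ricci models, so your symmetry argument would suffice there --- but not for the lemma as stated.)
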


By Lemma \ref{lemmaheatchapter2}, we have the following.
\begin{corollary} \label{corollaryheatchapter2}
For the model space $M^{+}$ (resp., $M^{-}$), its heat kernel
$H_{+}(p^{+},y,t)=H_{+}(r_{1},t)$ (resp.,
$H_{-}(p^{-},y,t)=H_{-}(r_{2},t)$) depends only on variables
$r_{1}:=d_{M^{+}}(p^{+},y)$ (resp., $r_{2}:=d_{M^{-}}(p^{-},y)$) and
$t$, where $d_{M^{+}}$ (resp., $d_{M^{-}}$) denotes the distance
function on $M^{+}$ (resp., $M^{-}$). Moreover, for all $t>0$, we
have
\begin{eqnarray*}
\frac{\partial}{\partial{r_{1}}}H_{+}(r_{1},t)<0,   \qquad
\left({\rm{resp.,}}~
\frac{\partial}{\partial{r_{2}}}H_{-}(r_{2},t)<0\right).
\end{eqnarray*}
\end{corollary}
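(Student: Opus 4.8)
The plan is to deduce Corollary \ref{corollaryheatchapter2} directly from Lemma \ref{lemmaheatchapter2} by verifying that the model spaces $M^{+}$ and $M^{-}$ are models in the sense of Definition \ref{defheatchapter2}. The only real content is to check that a spherically symmetric manifold $M^{\pm}=[0,l)\times_{f}\mathbb{S}^{n-1}$ (with base point $p^{\pm}$), or rather the geodesic ball $V_{n}(p^{\pm},r_{0})$ sitting inside it, satisfies the two defining conditions of an open model: first, that it equals an exponential image $\exp_{p^{\pm}}(B_{0}(r_{0}))$ diffeomorphically, and second, that the distance sphere $S(p^{\pm},r)$ has constant mean curvature for each $r<r_{0}$. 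The first point is immediate from Definition \ref{def1} together with the fact that $V_{n}(p^{\pm},r_{0})$ lies inside the injectivity domain (recall $l<inj(q)$ in Definition \ref{def1}, so $\exp_{p^{\pm}}$ is a diffeomorphism there). The second point follows from the warped-product form \eqref{2.3} of the metric: the second fundamental form of $S(p^{\pm},r)$ with respect to the outward normal $\partial_{r}$ is $\frac{f'(r)}{f(r)}$ times the induced metric, so the mean curvature is the constant $(n-1)f'(r)/f(r)$ on all of $S(p^{\pm},r)$. In fact, since the metric is exactly of the form $dr^{2}+f^{2}(r)h$ with $h$ the round metric on $\mathbb{S}^{n-1}$, $M^{\pm}$ is even an open Ricci model (this was already noted right after Definition \ref{defheatchapter2}).

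Having identified $M^{\pm}$ (and $V_{n}(p^{\pm},r_{0})$) as models, I would then simply invoke the three parts of Lemma \ref{lemmaheatchapter2}. Part (I) applied to the model $M^{+}$ (with Dirichlet or Neumann conditions on the ball $V_{n}(p^{+},r_{0})$, or no boundary conditions if one works on the whole closed model) gives that the heat kernel $H_{+}(p^{+},y,t)$ depends only on $r_{1}=d_{M^{+}}(p^{+},y)$ and $t$; write $H_{+}(p^{+},y,t)=H_{+}(r_{1},t)$. The same argument applied to $M^{-}$ gives $H_{-}(p^{-},y,t)=H_{-}(r_{2},t)$ with $r_{2}=d_{M^{-}}(p^{-},y)$. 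Finally, Part (III) of Lemma \ref{lemmaheatchapter2}, applied to each of the two models, yields $\frac{\partial}{\partial r_{1}}H_{+}(r_{1},t)<0$ and $\frac{\partial}{\partial r_{2}}H_{-}(r_{2},t)<0$ for all $r_{1},r_{2},t>0$. That is precisely the assertion of the corollary, so the proof closes.

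I do not expect any serious obstacle here: the corollary is a direct specialization of Lemma \ref{lemmaheatchapter2}. The one point deserving a sentence of care is making sure the radial symmetry is taken about the \emph{correct} center, namely the base point $p^{\pm}$ of the spherically symmetric structure — the distance-sphere condition in Definition \ref{defheatchapter2} only holds for spheres centered at that distinguished point, and correspondingly the heat kernel is radial only in the distance to $p^{\pm}$, not to an arbitrary point. A second minor point is to note that whether one treats $V_{n}(p^{\pm},r_{0})$ as an open model with boundary conditions or passes to the ambient (possibly closed) model $M^{\pm}$, Lemma \ref{lemmaheatchapter2} covers all these cases, so the conclusion is uniform. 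With these remarks in place the argument is essentially a one-line citation of the preceding lemma.
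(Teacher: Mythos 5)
Your proposal is correct and follows exactly the route the paper takes: the paper notes immediately after Definition \ref{defheatchapter2} that a spherically symmetric manifold is an open or closed Ricci model with respect to its base point, and then states the corollary as a direct application of parts (I) and (III) of Lemma \ref{lemmaheatchapter2}. Your write-up merely fills in the (easy) verification of the model conditions, which the paper leaves implicit.
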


We also need the following strong maximum (resp., minimum) principle
(cf. \cite{i}, p. 180).
\begin{theorem} \label{71theoremheatchapter2}
Given a Riemannian manifold $M$ with the Laplacian $\Delta$, and the
associated heat operator $L=\Delta-\frac{\partial}{\partial{t}}$.
Let $u(x,t)$ be a bounded continuous function on $M\times[0,T]$,
which is $C^{2}$ with respect to the variable $x\in{M}$, and $C^{1}$
with respect to $t\in[0,T]$, and which satisfies
\begin{eqnarray*}
Lu\geq0  \qquad \left(Lu\leq0\right)
\end{eqnarray*}
on $M\times(0,T)$. If there exists $(x_{0},t_{0})$ in $M\times(0,T]$
such that
\begin{eqnarray*}
u(x_{0},t_{0})=\sup\limits_{M\times[0,T]}u(x,t), \qquad
\left({\rm{resp.,}}~
u(x_{0},t_{0})=\inf\limits_{M\times[0,T]}u(x,t)\right),
\end{eqnarray*}
then
\begin{eqnarray*}
u|M\times[0,t_{0}]=u(x_{0},t_{0}).
\end{eqnarray*}
\end{theorem}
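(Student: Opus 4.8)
The plan is to reduce the assertion to the classical Euclidean parabolic strong maximum principle in coordinate charts and then to propagate it over all of $M$ by a connectedness argument. First I would record two reductions. If $Lu\le 0$ and $u(x_0,t_0)=\inf_{M\times[0,T]}u$, then $\tilde u:=-u$ satisfies $L\tilde u\ge 0$ and $\tilde u(x_0,t_0)=\sup_{M\times[0,T]}\tilde u$, so it suffices to treat the subsolution case: $Lu\ge 0$ on $M\times(0,T)$, $m:=u(x_0,t_0)=\sup_{M\times[0,T]}u$, and $t_0\in(0,T]$. Also $M$ is connected (otherwise the stated conclusion fails on other components; one may simply restrict to the component of $x_0$). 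It then suffices to show $u(x,t)=m$ for all $(x,t)\in M\times(0,t_0]$, since the value at $t=0$ follows from the continuity of $u$.

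The local ingredient is as follows. In any connected coordinate ball $B\subseteq M$ the operator $\Delta=\tfrac{1}{\sqrt{\det g}}\,\partial_i\!\big(\sqrt{\det g}\,g^{ij}\partial_j\big)$ is, on relatively compact subsets, uniformly elliptic with smooth coefficients and no zeroth-order term, so Nirenberg's parabolic strong maximum principle applies to $L=\Delta-\partial_t$. Concretely I would use the form: if $Lu\ge 0$ on a cylinder $B\times(a,b)$ with $0<a<b\le T$ and $u$ attains the value $m=\sup_{M\times[0,T]}u$ at some $(x_1,t_1)\in B\times(a,b]$ (an interior point, or a point of the top face $t_1=b$), then $u\equiv m$ on $B\times(a,t_1]$. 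I would quote this from the parabolic maximum-principle literature (cf. \cite{i}), checking only that for a product cylinder over a connected base every point of $B\times(a,t_1]$ can be joined to $(x_1,t_1)$ by a path along which $t$ is nondecreasing, which is the hypothesis of Nirenberg's theorem.

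Now I would globalise. Applying the local principle in $B(x_0)\times(\eta,t_0)$ for each $\eta\in(0,t_0)$, with the maximum $m$ attained at $(x_0,t_0)$ on the top face, gives $u\equiv m$ on $B(x_0)\times(\eta,t_0]$; letting $\eta\downarrow 0$ yields $u\equiv m$ on $B(x_0)\times(0,t_0]$. Fix any $s\in(0,t_0]$ and put $W_s:=\{x\in M:\ u(x,s)=m\}$. By continuity $W_s$ is closed, and it is nonempty since it contains the coordinate ball $B(x_0)$. It is also open: given $x_1\in W_s$, pick a connected coordinate ball $B(x_1)$ and a short cylinder $B(x_1)\times(s-\delta,s']$ with $s-\delta>0$, $s\le s'\le T$, chosen so that $(x_1,s)$ is an interior point or lies on the top face; the local principle then forces $u\equiv m$ on $B(x_1)$ at time $s$, i.e.\ $B(x_1)\subseteq W_s$. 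Since $M$ is connected, $W_s=M$. As $s\in(0,t_0]$ was arbitrary, $u\equiv m$ on $M\times(0,t_0]$, and continuity extends this to $M\times[0,t_0]$, which is the claim.

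I expect the genuine content to sit in the local Euclidean step — Nirenberg's strong maximum principle, whose proof rests on the parabolic Hopf boundary-point lemma together with a chaining argument — so I would invoke it rather than reprove it. The globalisation is soft, but two points deserve attention: (i) the backwards-in-time propagation must be carried to every time level in $(0,t_0]$ and down toward $t=0$, which is why above I let $\eta\downarrow 0$ rather than stopping at a single intermediate sublevel; and (ii) the boundedness of $u$ is not used anywhere in this argument — it is the hypothesis relevant to weak maximum principles on noncompact manifolds, whereas here the extremum is assumed to be attained.
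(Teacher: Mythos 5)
Your proposal is correct and follows exactly the route the paper itself indicates: the paper gives no detailed proof, merely citing Chavel and Evans for the Euclidean local case and appealing to ``a standard continuation argument'' for general $M$ — which is precisely the chart-by-chart reduction plus connectedness/chaining argument you carry out. Your write-up simply supplies the details (the downward propagation in $t$ via $\eta\downarrow 0$ and the open-closed argument for $W_s$) that the paper leaves implicit.
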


Clearly, the heat equation satisfies both the strong maximum
principle and the strong minimum principle, which implies that the
solution of the heat equation can only achieve its maximum or
minimum on the boundary $\overline{M\times(0,T]}-M\times(0,T]$. One
can easily get a proof of Theorem \ref{71theoremheatchapter2} in
\cite{evance} when $M$ is diffeomorphic to a domain in Euclidean
space. By a standard continuation argument, then one is able to get
a proof for an arbitrary manifold $M$.

By applying Theorems \ref{BishopI} and \ref{BishopII}, Corollary
\ref{corollaryheatchapter2} and Theorem \ref{71theoremheatchapter2},
we can prove the following.

\begin{theorem} \label{7theoremchapter2}
If $M$ is a
 complete n-dimensional Riemannian manifold with a radial sectional curvature
upper bound $k(t)=-\frac{f''(t)}{f(t)}$ w.r.t. a point $p\in{M}$,
then, for $r_{0}<\min\{inj(p),l\}$, we have
 \begin{eqnarray} \label{6.1}
 H(p,y,t)\geq{H_{+}(d_{M^{+}}(p^{+},q),t)}
 \end{eqnarray}
 holds for all $(y,t)\in{B(p,r_{0})\times(0,\infty)}$  with $d_{M^{+}}(p^{+},q)=d_{M}(p,y)$ for any $q\in{M^{+}}$,
 where $d_{M^{+}}$ and $d_{M}$
 denote the distance functions on $M^{+}$ and $M$, respectively. The
 equality in (\ref{6.1}) holds at some
 $(y_{0},t_{0})\in{B(p,r_{0})\times(0,\infty)}$ if and only if
 $B(p,r_{0})$ is isometric to $V_{n}(p^{+},r_{0})$.

 On the other hand, if $M$ is a
 complete $n$-dimensional Riemannian manifold with a radial Ricci curvature
 lower bound $(n-1)k(t)=-\frac{(n-1)f''(t)}{f(t)}$ w.r.t. a point $p\in{M}$, then, for all
$(y,t)\in{B(p,r_{0})\times(0,\infty)}$ and  $r_{0}<\min\{l(p),l\}$
with $l(p)$ defined as in (\ref{important}),  we have
\begin{eqnarray} \label{6.2}
 H(p,y,t)\leq{H_{-}(d_{M^{-}}(p^{-},q),t)}
 \end{eqnarray}
with $d_{M^{-}}(p^{-},q)=d_{M}(p,y)$ for any $q\in{M^{-}}$,
 where $d_{M^{-}}$ and $d_{M}$
 denote the distance functions on $M^{-}$ and $M$, respectively. The
 equality in (\ref{6.2}) holds at some
 $(y_{0},t_{0})\in{B(p,r_{0})\times(0,\infty)}$ if and only if
 $B(p,r_{0})$ is isometric to $V_{n}(p^{+},r_{0})$.
$\\$(The boundary condition will either be Dirichlet or Neumann.)
\end{theorem}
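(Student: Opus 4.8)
The plan is to adapt the classical barrier argument of Cheeger--Yau and Debiard--Gaveau--Mazet to the ``radial curvature'' framework of Section~\ref{section2}, using the generalized Bishop volume comparison theorems in place of the usual Laplacian comparison theorem and then invoking the strong maximum principle (Theorem~\ref{71theoremheatchapter2}). The two assertions are proved in the same way, so I describe the sectional-curvature case, the Ricci case being entirely parallel with Theorem~\ref{BishopI} in place of Theorem~\ref{BishopII}. Here $H$ and $H_\pm$ denote the heat kernels of the balls $B(p,r_0)$ and $V_n(p^\pm,r_0)$ subject to the same (Dirichlet or Neumann) condition. The first step is to manufacture a comparison function on $B(p,r_0)\times(0,\infty)$ out of the model heat kernel: writing $r(x):=d_M(p,x)$, set $\bar{H}(x,t):=H_+(r(x),t)$. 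This is legitimate because, by Corollary~\ref{corollaryheatchapter2}, $H_+$ is a function of the distance to $p^+$ and of $t$ alone; and since $r_0<\min\{inj(p),l\}$ in part~(I), $r$ is smooth on $B(p,r_0)\setminus\{p\}$, so $\bar{H}$ is an admissible test function for the maximum principle.

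The second step is to compute $L\bar{H}$. Using $|\nabla r|=1$ and $\Delta_M(\varphi\circ r)=\varphi''\circ r+(\varphi'\circ r)\,\Delta_M r$, together with the fact that $H_+$ solves the model heat equation $\partial_r^2H_++m_+(r)\,\partial_rH_+=\partial_tH_+$ with $m_+(r)=(n-1)f'(r)/f(r)=\Delta_{M^+}r$, one gets
\[
L\bar{H}=\bigl(\Delta_M r-m_+(r)\bigr)\,\partial_rH_+ .
\]
Two facts now fix the sign of the right-hand side: $\partial_rH_+<0$ (Corollary~\ref{corollaryheatchapter2}), and the comparison between $\Delta_M r$ and $m_+(r)$, which is exactly Theorem~\ref{BishopII}, since $\bigl(\sqrt{|g|}/f^{n-1}\bigr)'\geq0$ is the same as $\partial_r\log\sqrt{|g|}\geq\partial_r\log f^{n-1}$, i.e.\ $\Delta_M r\geq m_+(r)$. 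Hence $\bar{H}$ is a supersolution (respectively, in the Ricci case, a subsolution, via the monotonicity of $\widetilde{\theta}=(J/f)^{n-1}$ from Theorem~\ref{BishopI}, which gives $\Delta_M r\leq m_-(r)$). In the Ricci case $B(p,r_0)$ may meet $Cut(p)$, where $r$ is only Lipschitz; there the inequality for $\Delta_M r$ is to be read in the barrier sense, the distributional singular part of $\Delta_M r$ along the cut locus being of the favourable sign, so the argument is unaffected.

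The third step is the comparison. Let $w:=\bar{H}-H$ (or $H-\bar{H}$, chosen so that $Lw\geq0$). On the parabolic boundary of $\overline{B(p,r_0)}\times[0,T]$ one checks $w=0$: as $t\to0^+$ both $H$ and $\bar{H}$ converge to the Dirac mass at $p$, so $w\to0$; on $\partial B(p,r_0)$, for Dirichlet data $H=0$ and $\bar{H}=H_+(r_0,t)=0$, while for Neumann data $\partial_\nu H=0$ and $\partial_\nu\bar{H}=\partial_rH_+\big|_{r=r_0}=0$. By the boundary-domain version of Theorem~\ref{71theoremheatchapter2} — obtained from the stated form by localization to coordinate charts and the continuation argument mentioned after it — $w$ attains its maximum on the parabolic boundary, which forces the stated inequality between $H$ and $H_+$ (resp.\ $H_-$). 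For the rigidity claim, equality at an interior point $(y_0,t_0)$ together with the \emph{strong} maximum principle forces $w\equiv0$ on $B(p,r_0)\times[0,t_0]$; unwinding the identity for $L\bar H$ gives $\Delta_M r\equiv m_+(r)$ there, i.e.\ $\bigl(\sqrt{|g|}/f^{n-1}\bigr)'\equiv0$ (resp.\ $\widetilde{\theta}$ constant), and the equality clauses of Theorem~\ref{BishopII} (resp.\ Theorem~\ref{BishopI}) then yield that $B(p,r_0)$ is isometric to $V_n(p^+,r_0)$ (resp.\ $V_n(p^-,r_0)$).

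The main obstacle I anticipate is analytic rather than geometric: making rigorous that $w(x,t)\to0$ uniformly as $t\to0^+$ so that the maximum principle genuinely applies on the closed strip $[0,T]$. Both kernels carry the same leading singularity $(4\pi t)^{-n/2}e^{-r(x)^2/4t}$, which makes the claim plausible, but one must actually control the difference near $t=0$, for instance through a parametrix comparison or off-diagonal Gaussian bounds. The secondary delicate point, in the Ricci part, is the failure of smoothness of $r$ on $Cut(p)$, handled either by the barrier formulation above or by approximating $r$ from above by smooth functions and passing to the limit. Everything else — the computation of $L\bar{H}$, the boundary checks, and the invocations of Theorems~\ref{BishopI}, \ref{BishopII} and \ref{71theoremheatchapter2} — is routine.
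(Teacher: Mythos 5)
Your proposal replaces the paper's argument by a pointwise barrier/maximum-principle argument, and this is where the trouble lies. The paper does not run the maximum principle on the strip at all to obtain the inequality: it starts from the reproducing identity (IV) of Theorem \ref{theoremheatchapter1}, writes $H(p,y,t)-H_{\pm}(\cdot,t)$ as $\int_0^t\int\frac{d}{ds}\left[H_{\pm}(r(z),t-s)H(z,y,s)\right]dV(z)\,ds$, moves the Laplacians across with Green's formula (the Dirichlet or Neumann condition kills the boundary terms), and lands on the space--time integral of $\left(\Delta_M-\Delta_{M^{\pm}}\right)H_{\pm}$ against the \emph{positive} kernel $H(z,y,s)$; the sign then comes from Theorems \ref{BishopI}--\ref{BishopII} and Corollary \ref{corollaryheatchapter2}, exactly as in your computation of $L\bar H$, and Theorem \ref{71theoremheatchapter2} is used only for the rigidity clause. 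The reason for this detour is precisely the obstacle you flag at the end and then leave unresolved: in your scheme $w=H-\bar H$ does \emph{not} tend to $0$ uniformly as $t\to0^{+}$. Convergence of both kernels to $\delta_p$ is a distributional statement; pointwise, both blow up like $(4\pi t)^{-n/2}$ near $y=p$ and their on-diagonal difference is of order $t^{1-n/2}$, which is unbounded for $n\geq3$. So the parabolic boundary of the strip is not controlled and the maximum principle cannot be applied as you describe. This is the essential gap, not a technicality, and Duhamel's identity is the standard (Cheeger--Yau) way around it.

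A second, independent problem is the direction of the inequality, which you cannot leave to be ``chosen so that $Lw\geq0$'': the sign is forced by your own (correct) identity. From $L\bar H=(\Delta_Mr-m_{+}(r))\,\partial_rH_{+}$, $\Delta_Mr\geq m_{+}(r)$ (Theorem \ref{BishopII}) and $\partial_rH_{+}<0$, you get $L(H-\bar H)\geq0$, so the maximum principle (if it applied) would give $H\leq H_{+}$; the parallel bookkeeping in the Ricci case gives $H\geq H_{-}$. These are the \emph{reverses} of the displayed (\ref{6.1}) and (\ref{6.2}). You should be aware that the source itself is inconsistent on this point: the signs asserted in (\ref{6.4}) and (\ref{6.6}) do not follow from Theorems \ref{BishopII} and \ref{BishopI} as invoked, and the later proof of Theorem \ref{theorem5} uses $H(p,p,t)\geq H_{-}(0,t)$ --- the direction your computation produces, and the one consistent with Cheeger--Yau and with the eigenvalue inequalities (\ref{6.7})--(\ref{6.8}) --- rather than the direction stated in (\ref{6.2}). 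So as written your argument does not establish the statement as displayed, and before repairing the $t\to0^{+}$ issue you must settle which inequality is actually being proved.
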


\begin{proof} By the assumptions on curvatures in Theorem \ref{7theoremchapter2},
we know that the model space $M^{+}=[0, l)\times_f\mathbb{S}^{n-1}$
or $M^{-}=[0, l)\times_f\mathbb{S}^{n-1}$ is determined by solving
the initial value problem
\begin{eqnarray*}
\left\{
\begin{array}{lll}
f''(t)+k(t)f(t)=0,\\
f(0)=0,\\
f'(0)=1.  &\quad
\end{array}
\right.
\end{eqnarray*}

Now, assume that the \emph{radial sectional curvature of $M$ is
bounded from above by a continuous function $k(t)=-f''(t)/f(t)$
w.r.t. $p\in{M}$}. By applying Theorem \ref{theoremheatchapter1}, we
have
\begin{eqnarray*}
\lefteqn{H(p,y,t)-H_{+}(d_{M^{+}}(p^{+},q),t)=H(p,y,t)-H_{+}(d_{M}(p,y),t)}\\
&=&H(p,y,t)-H_{+}(r_{1}(p,y),t)\\
&=&\int_{0}^{t}\int_{B(p,r_{0})}\frac{d}{ds}\left[H_{+}(r_{1}(p,z),t-s)H(z,y,s)\right]dV(z)ds\\
&=&-\int_{0}^{t}\int_{B(p,r_{0})}\frac{\partial}{\partial{s}}\left[H_{+}(r_{1}(p,z),t-s)\right]H(z,y,s)dV(z)ds\\
&&+\int_{0}^{t}\int_{B(p,r_{0})}H_{+}(r_{1}(p,z),t-s)\frac{\partial{H}}{\partial{s}}(z,y,s)dV(z)ds\\
&=&-\int_{0}^{t}\int_{B(p,r_{0})}\Delta_{M^{+}}H_{+}(r_{1}(p,z),t-s)H(z,y,s)dV(z)ds\\
&&+\int_{0}^{t}\int_{B(p,r_{0})}H_{+}(r_{1}(p,z),t-s)\Delta_{M}H(z,y,s)dV(z)ds,
\end{eqnarray*}
where $\Delta_{M^{+}}$, $\Delta_{M}$ are the Laplace operators on
$M^{+}$ and $M$, respectively. Since $r_{0}<\min\{inj(p),l\}$, by
applying Green's formula, and using either Dirichlet or Neumann
boundary condition, we have
\begin{eqnarray*}
\int_{B(p,r_{0})}H_{+}(r_{1}(p,y),t)\cdot\Delta_{M}H=\int_{B(p,r_{0})}\Delta_{M}H_{+}(r_{1}(p,y),t)\cdot{H}.
\end{eqnarray*}
So, we obtain
\begin{eqnarray} \label{6.3}
&&H(p,y,t)-H_{+}(d_{M^{+}}(p^{+},q),t)=\int_{0}^{t}\int_{B(p,r_{0})}\big{[}\Delta_{M}H_{+}(r_{1},t-s)-\Delta_{M^{+}}H_{+}(r_{1}(p,y),\nonumber\\
\lefteqn{ \qquad \qquad t-s)\big{]}\cdot H(z,y,s)dV(z)dt.}
\end{eqnarray}
On the other hand, in the geodesic spherical coordinates near $p$ or
$p^{+}$, for function of $r_{1}(p,y)=d_{M^{+}}(p^{+},q)=d_{M}(p,y)$,
we have
\begin{eqnarray*}
&&\Delta_{M^{+}}=\frac{\partial^{2}}{\partial{r_{1}^{2}}}+\frac{\left[f^{n-1}(r_{1})\right]'}{f^{n-1}(r_{1})}\frac{\partial}{\partial{r_{1}}},\\
&&\Delta_{M}=\frac{\partial^{2}}{\partial{r_{1}}^{2}}+\frac{\left[det\mathbb{A}(r_{1},\xi)\right]'}{det\mathbb{A}(r_{1},\xi)}\frac{\partial}{\partial{r_{1}}}=\frac{\partial^{2}}{\partial{r_{1}}^{2}}+\frac{\left(\sqrt{|g|}\right)'}{\sqrt{|g|}}\frac{\partial}{\partial{r_{1}}},
\end{eqnarray*}
where $\mathbb{A}(r_{1},\xi)$ is the path of linear transformations
defined in Section \ref{section2}, and $\sqrt{|g|}$ is defined as
(\ref{J}). So, by Theorem \ref{BishopII}, we have
\begin{eqnarray} \label{6.4}
\Delta_{M}H_{+}(r_{1},t-s)-\Delta_{M^{+}}H_{+}(r_{1}(p,z),t-s)=\left[\frac{\left(\sqrt{|g|}\right)'}{\sqrt{|g|}}-\frac{\left[f^{n-1}(r_{1})\right]'}{f^{n-1}(r_{1})}\right]\frac{\partial{H_{+}}}{\partial{r_{1}}}\geq0.\quad
\end{eqnarray}
Substituting (\ref{6.4}) into (\ref{6.3}), together with Lemma
\ref{lemmaheat2chapter2}, we obtain
\begin{eqnarray*}
H(p,y,t)-H_{+}(d_{M^{+}}(p^{+},q),t)\geq0,
\end{eqnarray*}
which implies (\ref{6.1}). When equality in (\ref{6.1}) holds at
some $(y_{0},t_{0})\in{B(p,r_{0})\times(0,\infty)}$,  by Theorem
\ref{71theoremheatchapter2}, we know that
$H(p,y,t)={H_{+}(d_{M^{+}}(p^{+},q),t)}=H(p,y_{0},t_{0})$ on
$B(p,r_{0})\times[0,t_{0}]$. Together with (\ref{6.4}), we know that
\begin{eqnarray*}
\frac{\left(\sqrt{|g|}\right)'}{\sqrt{|g|}}=\frac{\left[f^{n-1}(r_{1})\right]'}{f^{n-1}(r_{1})}
\end{eqnarray*}
holds on $B(p,r_{0})$. Then by Theorem \ref{BishopII}, we have
\begin{eqnarray*}
\mathbb{A}(r_{1},\xi)=f(r_{1})I
\end{eqnarray*}
for all $r_{1}\leq{r_{0}}$, which implies that $B(p,r_{0})$ is
isometric to $V_{n}(p^{+},r_{0})$.

 Now, assume that the \emph{radial Ricci
curvature of $M$ is bounded from below by a continuous function
$(n-1)k(t)=-(n-1)f''(t)/f(t)$ w.r.t. $p\in{M}$}, and
$r_{0}<\min\{l(p),l\}$. Since the geodesic ball $B(p,r_{0})$ maybe
has points on the cut-locus, which leads to the invalidity of the
path of linear transformations $\mathbb{A}$, we need to use a limit
procedure shown in \cite{cy} to avoid this problem. As the previous
case, by applying Theorem \ref{theoremheatchapter1}, we have
\begin{eqnarray} \label{6.5}
\lefteqn{H(p,y,t)-H_{-}(d_{M^{-}}(p^{-},q),t)=H(p,y,t)-H_{-}(d_{M}(p,y),t)}\nonumber\\
&=&H(p,y,t)-H_{-}(r_{2}(p,y),t)\nonumber\\
&=&-\int_{0}^{t}\int_{B(p,r_{0})}\frac{\partial}{\partial{s}}\left[H_{-}(r_{2}(p,z),t-s)\right]H(z,y,s)dV(z)ds\nonumber\\
&&+\int_{0}^{t}\int_{B(p,r_{0})}H_{-}(r_{2}(p,z),t-s)\frac{\partial{H}}{\partial{s}}(z,y,s)dV(z)ds.
\end{eqnarray}
For any $\xi\in{S^{n-1}_{p}}\subseteq{T_{p}M}$, let
$g(\xi):=\min\{d_{\xi},r_{0}\}$ with $d_{\xi}$ defined in Section
\ref{section2}. Clearly, $g(\xi)$ is a continuous function on the
unit sphere $S^{n-1}_{p}$. As in \cite{cg}, one can choose a
sequence of smooth functions $g_{\epsilon}$ on $S^{n-1}_{p}$, with
$g_{\epsilon}(\xi)<g(\xi)$ for any $\xi\in{S^{n-1}_{p}}$, such that
$g_{\epsilon}$ converges uniformly to $g$ as $\epsilon\rightarrow0$
and the set
\begin{eqnarray*}
V_{\epsilon}=\{\exp_{p}(t\xi)|t\leq{g_{\epsilon}(\xi)}\}
\end{eqnarray*}
is compact. Clearly, $V_{\epsilon}$ is within the cut locus of $p$.
So, the expression (\ref{6.5}) becomes
\begin{eqnarray*}
\lefteqn{H(p,y,t)-H_{-}(d_{M^{-}}(p^{-},q),t)=H(p,y,t)-H_{-}(r_{2}(p,y),t)}\nonumber\\
&=&\lim\limits_{\epsilon\rightarrow0}\Big{\{}-\int_{0}^{t}\int_{V_{\epsilon}}\frac{\partial}{\partial{s}}\left[H_{-}(r_{2}(p,z),t-s)\right]H(z,y,s)dV(z)ds\nonumber\\
&&+\int_{0}^{t}\int_{V_{\epsilon}}H_{-}(r_{2}(p,z),t-s)\frac{\partial{H}}{\partial{s}}(z,y,s)dV(z)ds\Big{\}}\nonumber\\
&=&\lim\limits_{\epsilon\rightarrow0}\Big{\{}-\int_{0}^{t}\int_{V_{\epsilon}}\Delta_{M^{-}}\left[H_{-}(r_{2}(p,z),t-s)\right]H(z,y,s)dV(z)ds\nonumber\\
&&+\int_{0}^{t}\int_{V_{\epsilon}}H_{-}(r_{2}(p,z),t-s)\Delta_{M}H(z,y,s)dV(z)ds\Big{\}},
\end{eqnarray*}
where $\Delta_{M^{-}}$, $\Delta_{M}$ are the Laplace operators on
$M^{-}$ and $M$, respectively. Then, similar to the previous case,
by applying Theorem \ref{BishopI} and Corollary
\ref{corollaryheatchapter2}, we can obtain
\begin{eqnarray} \label{6.6}
\lefteqn{H(p,y,t)-H_{1}(d_{M^{-}}(p^{-},y),t)}\nonumber\\
&=&\lim\limits_{\epsilon\rightarrow0}\left\{\int_{0}^{t}\int_{V_{\epsilon}}\left[\Delta_{M}H_{-}(r_{2},t-s)-\Delta_{M^{-}}H_{-}(r_{2}(p,z),t-s)\right]h(z,y,s)dV(z)dt\right\}\nonumber\\
&=&\lim\limits_{\epsilon\rightarrow0}\left\{\int_{0}^{t}\int_{V_{\epsilon}}\left[\frac{\left[J^{n-1}(r_{2},\xi)\right]'}{J^{n-1}(r_{2},\xi)}-\frac{\left[f^{n-1}(r_{2})\right]'}{f^{n-1}(r_{2})}\right]\frac{\partial{H_{-}}}{\partial{r_{2}}}H(z,y,s)dV(z)dt\right\}\leq0,
\end{eqnarray}
with the function $J(r_{2},\xi)$ defined as (\ref{J}), which implies
(\ref{6.2}). When equality in (\ref{6.2}) holds at some
$(y_{0},t_{0})\in{B(p,r_{0})\times(0,\infty)}$,  by Theorem
\ref{71theoremheatchapter2}, we know that
$H(p,y,t)={H_{-}(d_{M^{-}}(p^{-},q),t)}=H(p,y_{0},t_{0})$ on
$B(p,r_{0})\times[0,t_{0}]$. Together with (\ref{6.6}), we know that
\begin{eqnarray*}
\frac{\left[J^{n-1}(r_{2},\xi)\right]'}{J^{n-1}(r_{2},\xi)}=\frac{\left[f^{n-1}(r_{2})\right]'}{f^{n-1}(r_{2})}
\end{eqnarray*}
holds on $B(p,r_{0})$. Then by Theorem \ref{BishopI}, we have
\begin{eqnarray*}
\mathbb{A}(r_{2},\xi)=f(r_{2})I
\end{eqnarray*}
for all $r_{2}\leq{r_{0}}$, which implies that $B(p,r_{0})$ is
isometric to $V_{n}(p^{-},r_{0})$. Our proof is finished.
\end{proof}

\begin{remark} \rm{ In fact, the completeness of the prescribed
manifold $M$ is a little strong to get the comparison results
(\ref{6.1}) and (\ref{6.2}) for the heat kernel. In \cite{cy},
Cheeger and Yau have shown that if the injectivity radius at some
point $p$ of a prescribed manifold $M$ is bounded from below, then,
under the assumptions on curvature therein, a lower bound can be
given for the heat kernel of geodesic balls on $M$. However, here we
prefer to assume that the prescribed manifold $M$ is complete, since
if $M$ is complete, then for $B(p,r_{0})\subseteq{M}$ with $r_{0}$
finite we can always find optimally continuous bounds for the radial
Ricci and sectional curvatures w.r.t. $p$ (see (\ref{extraII}) and
(\ref{extraIII})). This implies that the assumption on the
completeness of $M$ is feasible.}
\end{remark}

Theorem \ref{theoremheat2chapter1} shows us a connection between the
Dirichlet heat kernel and the Dirichlet eigenvalue of the Laplacian.
Here we would like to use this connection to give another ways to
prove the following Cheng-type eigenvalue inequalities for the
Laplace operator, which have been given in \cite{fmi}.

\begin{theorem} \label{theorem5}
If $M$ is a
 complete n-dimensional Riemannian manifold with a radial Ricci curvature
 lower bound $(n-1)k(t)=-\frac{(n-1)f''(t)}{f(t)}$ w.r.t. a point $p\in{M}$,
then, $r_{0}<\min\{l(p),l\}$ with $l(p)$ defined as in
(\ref{important}), we have
\begin{eqnarray}  \label{6.7}
\lambda_{1}(B(p,r_{0}))\leq\lambda_{1}\left(V_{n}(p^{-},r_{0})\right).
\end{eqnarray}

On the other hand, if $M$ is a
 complete $n$-dimensional Riemannian manifold with a radial sectional curvature
upper bound $k(t)=-\frac{f''(t)}{f(t)}$ w.r.t. a point $p\in{M}$,
then, for $r_{0}<\min\{inj(p),l\}$, we have
\begin{eqnarray}  \label{6.8}
\lambda_{1}\left(B(p,r_{0})\right)\geq\lambda_{1}\left(V_{n}(p^{+},r_{0})\right).
\end{eqnarray}
Here $\lambda_{1}(\cdot)$ in (\ref{6.7}) and (\ref{6.8}) denotes the
first eigenvalue of the corresponding geodesic ball.
\end{theorem}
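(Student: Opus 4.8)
The plan is to read the eigenvalue comparisons (\ref{6.7}) and (\ref{6.8}) directly off the heat kernel comparisons of Theorem \ref{7theoremchapter2}, using the spectral expansion in Theorem \ref{theoremheat2chapter1} to recover the first eigenvalue from the long time behaviour of the heat kernel.

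First I would record the following standard consequence of Theorem \ref{theoremheat2chapter1}. For any of the geodesic balls occurring here -- which are normal domains, since $r_{0}<\min\{l(q),l\}$ (resp. $r_{0}<\min\{inj(q),l\}$) -- the Dirichlet heat kernel obeys $H_{\Omega}(x,y,t)=\sum_{j\ge 1}e^{-\lambda_{j}(\Omega)t}\phi_{j}(x)\phi_{j}(y)$ with $0<\lambda_{1}(\Omega)<\lambda_{2}(\Omega)\le\cdots$, the first eigenvalue simple and the first eigenfunction $\phi_{1}$ of one sign, hence strictly positive in the interior of $\Omega$. Fixing an interior point $x$ and putting $y=x$, the absolute and uniform convergence of the series lets me bound the tail $\sum_{j\ge 2}e^{-\lambda_{j}(\Omega)t}\phi_{j}(x)^{2}$ by a multiple of $e^{-\lambda_{2}(\Omega)t}$ for $t$ large, so $H_{\Omega}(x,x,t)=e^{-\lambda_{1}(\Omega)t}\phi_{1}(x)^{2}\bigl(1+O(e^{-(\lambda_{2}-\lambda_{1})t})\bigr)$, and therefore
$$\lambda_{1}(\Omega)=-\lim_{t\to\infty}\frac{1}{t}\log H_{\Omega}(x,x,t)$$
for every interior point $x$. (The heat trace $\int_{\Omega}H_{\Omega}(x,x,t)\,dV\sim e^{-\lambda_{1}(\Omega)t}$ gives the same formula, but the pointwise version evaluated at the centre is all I need below, and it sidesteps any discussion of the cut locus.)

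Now, for (\ref{6.7}) I would apply the radial Ricci half of Theorem \ref{7theoremchapter2} to $B(p,r_{0})\subset M$ and to the model ball $V_{n}(p^{-},r_{0})$, both equipped with the Dirichlet condition and with $p$, $p^{-}$ as centres; the hypothesis $r_{0}<\min\{l(p),l\}$ is exactly the one demanded there, and the kernel $H_{-}$ of that theorem is to be read as the heat kernel of $V_{n}(p^{-},r_{0})$ with the same boundary condition, which is how it enters the proof of Theorem \ref{7theoremchapter2} through Green's formula on $B(p,r_{0})$. Specialising the comparison to the centre, that is $y=p$ and $q=p^{-}$ so that the two matched radii are both zero and no cut locus difficulty arises, gives a comparison of $H_{B(p,r_{0})}(p,p,t)$ with $H_{V_{n}(p^{-},r_{0})}(p^{-},p^{-},t)$ valid for all $t>0$. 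Taking logarithms, dividing by $t$ and letting $t\to\infty$, the displayed formula from the first step turns this into $\lambda_{1}(B(p,r_{0}))\le\lambda_{1}(V_{n}(p^{-},r_{0}))$, which is (\ref{6.7}). For (\ref{6.8}) I would run the identical argument with the radial sectional half of Theorem \ref{7theoremchapter2}, valid under $r_{0}<\min\{inj(p),l\}$, which reverses the comparison of the two centre heat kernels and hence gives $\lambda_{1}(B(p,r_{0}))\ge\lambda_{1}(V_{n}(p^{+},r_{0}))$.

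I expect the only genuinely delicate point to be the first step: one needs $\lambda_{1}$ to be simple and the first Dirichlet eigenfunction to be nonzero at the centre, so that the leading exponential of $H_{\Omega}(x,x,t)$ carries exactly the rate $\lambda_{1}$; both facts are supplied by Theorem \ref{theoremheat2chapter1} together with the positivity of $\phi_{1}$ in the interior. A lesser bookkeeping matter is to keep the roles of $H_{\pm}$ straight -- they must be the heat kernels of the model geodesic balls $V_{n}(p^{\pm},r_{0})$, which have discrete spectra, rather than of the whole model spaces -- and to note that these model balls are themselves normal domains, so that Theorem \ref{theoremheat2chapter1} applies to them as well.
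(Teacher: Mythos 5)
Your proposal is correct and follows essentially the same route as the paper: the author also specializes the heat kernel comparison of Theorem \ref{7theoremchapter2} to the centre, $H(p,p,t)\geq H_{-}(0,t)$, expands both Dirichlet heat kernels via the Sturm--Liouville decomposition of Theorem \ref{theoremheat2chapter1}, and lets $t\to\infty$ using $\phi_{1}^{2}(p)>0$, $\tilde{\phi}_{1}^{2}(0)>0$ to force $\lambda_{1}-\tilde{\lambda}_{1}\leq 0$ (and symmetrically for the sectional curvature case). Your formulation via $\lambda_{1}(\Omega)=-\lim_{t\to\infty}t^{-1}\log H_{\Omega}(x,x,t)$ is just a repackaging of the same asymptotic argument.
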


\begin{proof} Here we would like to use a method similar to that of theorem
1 in p. 104-105 of \cite{rs}. As before, denote separately the
Dirichlet heat kernels of $B(p,r_{0})$, $V_{n}(p^{-},r_{0})$ by
$H(p,y,t)$ and $H_{-}(d_{M^{-}}(p^{-},q),t)$, with
$r_{2}(p,y)=d_{M}(p,y)=d_{M^{-}}(p^{-},q)$, where $d_{M}$ and
$d_{M^{-}}$ are distance functions on $M$ and $M^{-}$, respectively.
If $M$ has \emph{a radial Ricci curvature lower bound
$(n-1)k(t)=-(n-1)f''(t)/f(t)$} w.r.t. $p\in{M}$, and
$r_{0}<\min\{l(p),l\}$, then by Theorem \ref{7theoremchapter2}, we
have
\begin{eqnarray} \label{6.9}
H(p,p,t)\geq{H_{-}(0,t)=H_{-}(r_{2}(p,p),t)}
\end{eqnarray}
for all $t>0$. Furthermore, by Theorem \ref{theoremheat2chapter1},
we can obtain
\begin{eqnarray*}
H(p,p,t)=\sum\limits_{i=1}^{\infty}e^{-\lambda_{i}t}\phi_{i}^{2}(p),\\
H_{-}(0,t)=\sum\limits_{i=1}^{\infty}e^{-\tilde{\lambda}_{i}t}\tilde{\phi}_{i}^{2}(0),
\end{eqnarray*}
with $\lambda_{i}=\lambda_{i}\left(B(p,r_{0})\right)$,
$\tilde{\lambda}_{i}=\lambda_{i}\left(V_{n}(p^{-},r_{0})\right)$,
and $\phi_{i}$, $\tilde{\phi}_{i}$ the corresponding eigenfunctions.
Together with (\ref{6.9}), it follows that
\begin{eqnarray*}
e^{-\lambda_{1}t}\left[\phi^{2}_{1}(p)+e^{-(\lambda_{2}-\lambda_{1})t}\phi^{2}_{2}(p)+\cdots\right]\geq
e^{-\tilde{\lambda}_{1}t}\left[\tilde{\phi}^{2}_{1}(0)+e^{-(\tilde{\lambda}_{2}-\tilde{\lambda}_{1})t}\tilde{\phi}^{2}_{2}(0)+\cdots\right],
\end{eqnarray*}
which is equivalent with
\begin{eqnarray} \label{6.10}
\phi^{2}_{1}(p)+e^{-(\lambda_{2}-\lambda_{1})t}\phi^{2}_{2}(p)+\cdots\geq
e^{(\lambda_{1}-\tilde{\lambda}_{1})t}\left[\tilde{\phi}^{2}_{1}(0)+e^{-(\tilde{\lambda}_{2}-\tilde{\lambda}_{1})t}\tilde{\phi}^{2}_{2}(0)+\cdots\right].
\end{eqnarray}
Since $\phi^{2}_{1}(p)>0$, $\tilde{\phi}^{2}_{1}(0)>0$, and
$\lambda_{m}>\lambda_{1}$ (resp.,
$\tilde{\lambda}_{m}>\tilde{\lambda}_{1}$) for any $m\geq2$, letting
$t\rightarrow\infty$ in (\ref{6.10}) results in
\begin{eqnarray*}
\lambda_{1}-\tilde{\lambda}_{1}\leq0,
\end{eqnarray*}
which implies
\begin{eqnarray*}
\lambda_{1}\left(B(p,r_{0})\right)\leq\lambda_{1}\left(V_{n}(p^{-},r_{0})\right).
\end{eqnarray*}

On the other hand, by applying Theorem \ref{7theoremchapter2} and a
similar method as above, we can easily obtain that for
$r_{0}<\min\{inj(p),l\}$, the inequality
\begin{eqnarray*}
\lambda_{1}\left(B(p,r_{0})\right)\geq\lambda_{1}\left(V_{n}(p^{+},r_{0})\right)
\end{eqnarray*}
holds when $M$ has \emph{a radial sectional curvature upper bound
$k(t)=-f''(t)/f(t)$ w.r.t. $p$}. Our proof is finished.
\end{proof}

\begin{remark} \rm{ In the above proof of Theorem
\ref{theorem5}, when
$\lambda_{1}\left(B(p,r_{0})\right)=\lambda_{1}\left(V_{n}(p^{-},r_{0})\right)$,
we cannot get the characterization, $B(p,r_{0})$ is isometric to
$V_{n}(p^{-},r_{0})$, for this equality as theorem 3.3 in
\cite{fmi}. In fact, if
$\lambda_{1}\left(B(p,r_{0})\right)=\lambda_{1}\left(V_{n}(p^{-},r_{0})\right)$
here, we can only obtain that
$\lim_{t\rightarrow\infty}H(p,p,t)=\lim_{t\rightarrow\infty}H_{-}(0,t)$.
We are not sure whether there exists some $t_{0}<\infty$ such that
$H(p,p,t_{0})=H_{-}(0,t_{0})$ or not, which leads to the fact that
we cannot use the characterization for the equality of (\ref{6.2})
in Theorem \ref{7theoremchapter2}. This can be seen as the
limitation of this new way. The same situation happens to the
equality
$\lambda_{1}\left(B(p,r_{0})\right)=\lambda_{1}\left(V_{n}(p^{+},r_{0})\right)$.}
\end{remark}

\section*{Acknowledgments}
\renewcommand{\thesection}{\arabic{section}}
\renewcommand{\theequation}{\thesection.\arabic{equation}}
\setcounter{equation}{0} \setcounter{maintheorem}{0}
 This research
is supported by Funda\c{c}\~{a}o para a Ci\^{e}ncia e Tecnologia
(FCT) through a doctoral fellowship SFRH/BD/60313/2009. The author
would like to express his gratitude to his Ph.D. advisors, Prof.
Isabel Salavessa and Prof. Pedro Freitas, for suggesting problems
and supplying encouragement and guidance during his doctoral study
at Instituto Superior T\'{e}cnico (IST).

 \end{document}